\newcommand{\CondsC}{C.1--C.5}
\newtheorem{thm}{Theorem}
\newtheorem{lem}{Lemma}
\newtheorem{remark}{Remark}
\numberwithin{equation}{section}
\newcommand{\RR}{\mathbb{R}}
\newcommand{\T}{\mathbb{T}}
\newcommand{\sgn}{\text{sign}}
\newcommand{\pmm}{{+\!/\!-}}
\newcommand{\X}{\mathcal{X}}
\newcommand{\bt}{\bar{t}_0}
\newcommand{\by}{\bar{y}_0}
\newcommand{\tP}{\tilde{P}}
\newcommand{\tr}{\tilde{r}}
\newcommand{\te}{\tilde{\varepsilon}}
\newcommand{\tS}{\tilde{\Sigma}}
\newcommand{\wt}{\widehat{t}_0}
\newcommand{\tc}{\tilde{c}}
\newcommand{\epsmin}{\varepsilon_{\text{min}}}
\begin{document}

\title{The Melnikov method and subharmonic orbits in a piecewise smooth system}
\author{A. Granados, S.J. Hogan and T.M. Seara}
\date{}
\maketitle

\begin{abstract}
In this work we consider a two-dimensional piecewise smooth system, defined in
two domains separated by the switching manifold $x=0$. We assume that there exists a
piecewise-defined continuous Hamiltonian that is a first integral of the system.
We also suppose that the system possesses an invisible fold-fold at the
origin and two heteroclinic orbits connecting two hyperbolic critical points on either 
side of $x=0$. Finally, we assume that the region closed by
these heteroclinic connections is fully covered by periodic orbits surrounding
the origin, whose periods monotonically increase as they approach the
heteroclinic connection.\\
When considering a non-autonomous ($T$-periodic) Hamiltonian perturbation of
amplitude $\varepsilon$, using an impact map, we rigorously prove that,
for every $n$ and $m$ relatively prime and $\varepsilon>0$ small enough, there
exists a $nT$-periodic orbit impacting $2m$ times with the switching manifold at
every period if a modified subharmonic Melnikov function possesses a simple
zero. We also prove that, if the orbits are 
discontinuous when they cross $x=0$, then all
these orbits exist if the relative size of $\varepsilon>0$ with respect to the
magnitude of this jump is large enough.\\
We also obtain similar conditions for the splitting of the heteroclinic
connections.
\end{abstract}

{\bf Keywords:} subharmonic orbits, heteroclinic connections, non-smooth impact
systems, Melnikov method.

\section{Introduction}
The Melnikov method provides tools to determine the persistence of periodic
orbits and homoclinic/heteroclinic connections for planar regular systems under
non-autonomous periodic perturbations~\cite{GucHol83}. This persistence is
guaranteed by the existence of simple zeros of a certain function, the
subharmonic Melnikov function and the Melnikov function, respectively.  In this
work we extend these classical results to a class of piecewise smooth differential
equations generalizing a mechanical impact model. In such systems, the perturbation 
typically models an external forcing and, hence, affects a second order
differential equation. However, in this work, we allow for a general
periodic Hamiltonian perturbation, potentially influencing both velocity and
acceleration. Note that no symmetry is assumed in either the perturbed or
unperturbed system.\\

The unperturbed system is defined in two domains separated by a
switching manifold where the impacts occur, and possesses one hyperbolic critical
point on either side of it. We distinguish between two situations regarding
the unperturbed system. In the first one, which we call conservative, two
heteroclinic trajectories connect both hyperbolic points, and surround a region
completely covered by periodic orbits including the origin. In the second one,
we introduce an energy dissipation at the impacts, which is modeled by an
algebraic condition that forces the solutions to undergo a discontinuity every
time they cross the switching manifold. Then, the origin becomes a global attractor
and none of these objects can exist for the unperturbed system.\\

For a smooth system, the classical Melnikov method considers fixed (or periodic)
points of the time $T$ stroboscopic map, where $T$ is the period of the
perturbation.  However,
for our class of system, such a map becomes unwieldy because one has to
check the number of times that the flow crosses the switching manifold, which
is a priori unknown and can even be arbitrarily large. Instead, using the switching
manifold as a Poincar\'e section and adding time as variable, we consider the first return Poincar\'e map, the 
so-called \textit{impact map}. This map is smooth and hence we can use classical
perturbation theory to rigorously prove sufficient conditions for the existence of 
periodic orbits. In the conservative case, these conditions
turn out to be same ones given by the classical Melnikov method, so
extending it to a class of piecewise smooth systems
(Theorem~\ref{theo:subharmonic_melnikov_conservative}). In addition, we
rigorously prove that the simple zeros of the
subharmonic Melnikov function can guarantee the existence of periodic orbits
when the trajectories are forced to be discontinuous due to the loss of energy
at impact (Theorem~\ref{theo:dissipative_subharmonic_orbits}).\\
In addition, the impact map could also be used to prove the existence of
invariant KAM tori in the system. After writing the system in action-angle
variables, these ideas were applied in~\cite{KunKupYou97} to a different system
to prove the existence of such tori.

To prove the existence of heteroclinic connections
for the perturbed case, it is sufficient to look for the intersection with the switching manifold of the
stable and unstable manifolds~\cite{BruKoc91},~\cite{Hog92}. In this way, we rigorously extend the
classical Melnikov method for heteroclinic connections to a class of piecewise smooth
systems. When the loss
of energy is considered, we prove that the zeros of the Melnikov function
guarantee the existence of transversal heteroclinic intersections.
Both results are given in Theorem~\ref{theo:intersection_of_separatrices}.\\

This paper is organized as follows. In \S\ref{sec:system_decription} we describe
the class of system that we consider, state some notation and introduce tools 
needed for this work. In \S\ref{sec:existence_of_subharmonic_orbits}, we prove the
existence of periodic orbits distinguishing between the conservative and
dissipative cases. \S\ref{sec:intersection_of_separatrices} is devoted to heteroclinic
connections. Finally, in \S\ref{sec:rocking_block}, we use the example of the
rocking block to illustrate the results obtained regarding the periodic orbits,
and compare with the work of~\cite{Hog89}.

\section{System description}\label{sec:system_decription}
\subsection{General system definition}\label{sec:system_definition}
We divide the plane into two sets,
\begin{align*}
&S^+=\left\{ (x,y)\in\RR^2\,\vert\,x>0 \right\}\\
&S^-=\left\{ (x,y)\in\RR^2\,\vert\,x<0 \right\}
\end{align*}
separated by the switching manifold 
\begin{equation}
\Sigma=\Sigma^+\cup\Sigma^-\cup (0,0)
\label{eq:boundary}
\end{equation}
where
\begin{align*}
\Sigma^+&=\Big\{ (x,y)\in\RR^2\,\vert \, x>0 \Big\}\\
\Sigma^-&=\Big\{ (x,y)\in\RR^2\,\vert \, x<0 \Big\}.
\end{align*}
We consider the piecewise smooth system
\begin{equation}
\left( 
\begin{array}{c}
\dot{x}\\\dot{y}
\end{array}
 \right)=
\left\{ 
\begin{aligned}
&\X_0^+(x,y)+\varepsilon \X_1^+(x,y,t)&&\text{if }(x,y)\in S^+\\
&\X_0^-(x,y)+\varepsilon \X_1^-(x,y,t)&&\text{if }(x,y)\in S^-
\end{aligned}
 \right.
\label{eq:general_field}
\end{equation}
We assume $\X_0^\pm\in C^\infty(\RR^2)$ and
$\X_1^\pm(x,y,t)\in C^\infty(\RR^3)$, although this can be relaxed to less
regularity in $S^\pm$ and $S^\pm\times\RR$, respectively.\\
System (\ref{eq:general_field}) is a Hamiltonian system associated
with a $C^0$ piecewise smooth Hamiltonian of the form
\begin{equation}
H_\varepsilon(x,y,t)=H_0(x,y)+\varepsilon H_1(x,y,t).
\label{eq:general_Hamiltonian}
\end{equation}
The unperturbed $C^0(\RR^2)$ Hamiltonian $H_0$ is a classical Hamiltonian given by
\begin{equation}
H_0(x,y):=\frac{y^2}{2}+V(x):=\left\{ 
\begin{aligned}
&H_0^+(x,y):=\frac{y^2}{2}+V^+(x)&&\text{if }(x,y)\in S^+\cup \Sigma\\
&H_0^-(x,y):=\frac{y^2}{2}+V^-(x)&&\text{if }(x,y)\in S^-
\end{aligned}
\right.
\label{eq:unperturbed_Hamiltonian}
\end{equation}
with $V^\pm\in C^\infty (\RR^2)$ satisfying $V^+(0)=V^-(0)$.\\
Similarly, the non-autonomous $T$-periodic $C^0(\RR^3)$ perturbation,
$\varepsilon H_1$, is given by
\begin{equation*}
H_1(x,y,t):=\left\{
\begin{aligned}
&H_1^+(x,y,t)&&\text{if }(x,y)\in S^+\cup \Sigma^+\\
&H_1^-(x,y,t)&&\text{if }(x,y)\in S^-
\end{aligned}\right.
\end{equation*}
fulfilling $H_1^+(0,y,t)=H_1^-(0,y,t)$ $\forall (y,t)\in\RR^2$.\\
Then, the relation between (\ref{eq:general_field}) and
(\ref{eq:general_Hamiltonian}) is given by
\begin{equation}
\begin{aligned}
\X_0^++\varepsilon \X_1^+&=J\nabla (H_0^++\varepsilon H_1^+)\\
\X_0^-+\varepsilon\X_1^-&=J\nabla (H_0^-+\varepsilon H_1^-),
\end{aligned}
\label{eq:hamiltonian_properties}
\end{equation}
where $J$ is the usual symplectic matrix
\begin{equation*}
J=\left( \begin{array}{cc}
0&1\\-1&0
\end{array} \right).
\end{equation*}
We assume that the phase portrait of the unperturbed system
(\ref{eq:general_field}) ($\varepsilon=0$) is topologically equivalent to the
one shown in Fig.~\ref{fig:phase_portrait_unperturbed}, which we make precise in
the following hypotheses.
\begin{enumerate}[C.1]
\item There exist two hyperbolic critical points $z^+ \equiv (x^+,y^+)\in S^+$
and $z^- \equiv (x^-,y^-)\in S^-$ of saddle type belonging to the energy level
\begin{equation}
\Big\{ (x,y)\,\vert\,H_0(x,y)=c_1>0 \Big\}.
\label{eq:critical_points_energy_level}
\end{equation}
\item The origin is an invisible fold-fold of centre type~\cite{GuaSeaTei11}, such that $H_0(0,0)=0$.
\item There exist two heteroclinic orbits given by $W^u(z^-)=W^s(z^+)$
and\linebreak $W^u(z^+)=W^s(z^-)$ surrounding the origin and contained in the
energy level~(\ref{eq:critical_points_energy_level}).
\item The region between both heteroclinic orbits is fully covered by periodic
orbits surrounding the origin given by
\begin{equation}
\Lambda_c=\Big\{ (x,y)\in\RR^2\,\vert\,H_0(x,y)=c \Big\}
\label{eq:periodic_orbits}
\end{equation}
with $0<c<c_1$, and $\Lambda_c$ intersects $\Sigma$ transversally exactly twice.
\item The period of $\Lambda_c$ is a regular function of $c$ with strictly
positive derivative for $0<c<c_1$.
\end{enumerate}
Note that, as the unperturbed Hamiltonian $H_0$ is $C^\infty$ in $S^+$ and
$S^-$, the fact that the heteroclinic orbits are in the energy level
$H_0(x,y)=c_1$ follows automatically from hypothesis C.1. However, we include it
explicitly for clarity.\\

We wish to determine which of these objects and characteristics persist and
which are destroyed when the small non-autonomous $T$-periodic perturbation
$\varepsilon H_1$ is considered. Of interest is the splitting of the
separatrices and the persistence of periodic orbits.  In the smooth case, these
answers are given completely by the classical Melnikov method~\cite{GucHol83}.
Hence, it is natural to check whether these classical tools are still valid for
the piecewise smooth system presented above and if any changes to the method are
necessary.\\
Another interesting question that can be addressed with a similar approach is
the existence of $2$-dimensional invariant tori of system
(\ref{eq:general_field}) (see \cite{KunKupYou97,Kun00}).

\begin{figure}
\begin{center}
\includegraphics[width=0.8\textwidth]{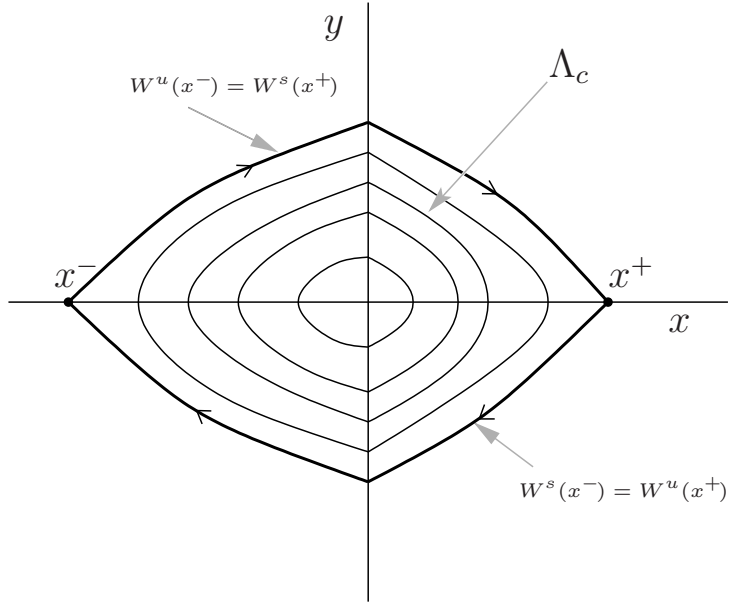}
\end{center}
\caption{Phase portrait for the unperturbed system (\ref{eq:general_field}).}
\label{fig:phase_portrait_unperturbed}
\end{figure}

\subsection{Poincar\'e impact map}\label{sec:impact_map}
To study system~(\ref{eq:general_field}) we will proceed as in \cite{Hog89}
using the Poincar\'e impact map. We consider the extended phase space
$\RR^2\times\RR$ adding time as a system variable and equation $\dot{t}=1$ to
Eq.~(\ref{eq:general_field}).  As the perturbation is periodic, this
time variable is usually defined in $\mathbb{T}=\RR/T$; however, it will be more useful
for us to consider $\RR$ instead.  We want to study the motion in the region
surrounded by the heteroclinic orbit,  so we consider in this extended phase-space
the Poincar\'e section
\begin{equation}
\tS^+=\left\{
(0,y,t)\in\RR^2\times\RR\,\vert\,0<y<\sqrt{2c_1} \right\}.
\label{eq:poincare_section}
\end{equation}
To simplify the notation, as the first coordinate in $\tS^+$ is always $0$, we
will omit its repetition whenever this does not lead to confusion.  The domain
of the Poincar\'e map is not $\tS^+$ but a suitable open set $U$, that depends on
$\varepsilon$ and, for $\varepsilon=0$, does not contain the heteroclinic
connection.\\

We now define the Poincar\'e impact map
\begin{equation*}
P_\varepsilon:U\subset\tS^+\longrightarrow \tS^+,
\end{equation*}
as follows (see Fig.~\ref{fig:poincare_map}).  
First, using the section
\begin{equation}
\tS^-=\left\{ (0,y,t)\in\RR^2\times\RR\,\vert\, -\sqrt{2c_1}<y<0 \right\},
\label{eq:poincare_section_negative}
\end{equation}
with $(0,y_0,t_0)\in U^+\subset\tS^+$, we define the map
\begin{equation*}
P^+_\varepsilon:U^+\subset\tS^+\longrightarrow\tS^-,
\end{equation*}
as
\begin{equation}
P^+_{\varepsilon}(y_0,t_0)=\left( \Pi_y\left( \phi^+\left(t_1; t_0,
0,y_0,\varepsilon\right)
\right),t_1 \right)
\label{eq:Pp}
\end{equation}
where $\phi^+(t;t_0,x,y,\varepsilon)$ is the flow associated with system
(\ref{eq:general_field}) restricted to $S^+$, and $t_1>t_0$ is the smallest value
of $t$ satisfying the condition
\begin{equation}
\Pi_x\left(\phi^+\big(t_1;t_0,0,y_0,\varepsilon\big)\right)=0.
\label{eq:t1}
\end{equation}
Similarly, we consider
\begin{equation*}
P^-_\varepsilon:U^-\subset\tS^-\longrightarrow\tS^+
\end{equation*}
for $(0,y_1,t_1)\in U^-\subset\tS^-$ defined by
\begin{equation}
P^-_\varepsilon(y_1,t_1)=\left( \Pi_y\left( \phi^-\left(t_2; t_1,
0,y_1,\varepsilon \right) \right),t_2 \right)
\label{eq:Pm}
\end{equation}
where $\phi^-(t;t_1,x,y,\varepsilon)$ is the flow associated with
(\ref{eq:general_field}) restricted to $S^-$, and $t_2>t_1$ is the smallest
value of $t$ satisfying the condition
\begin{equation}
\Pi_x\left(\phi^-\big(t_2;t_1,0,y_1,\varepsilon\big)\right)=0.
\label{eq:t2}
\end{equation}
Then the Poincar\'e impact map is defined as the composition
\begin{equation}
\begin{array}{cccc}
P_\varepsilon:&U\subset\tS^+&\longrightarrow&\tS^+\\
&{\text{\footnotesize($y_0,t_0)$}}&\longmapsto&\text{\footnotesize$P_\varepsilon^-\!\!\circ\!
P_\varepsilon^+\!(y_0,t_0)$}\\
\end{array}
\label{eq:poincare_map}
\end{equation}
\begin{figure}
\begin{center}
\includegraphics[width=0.6\textwidth]{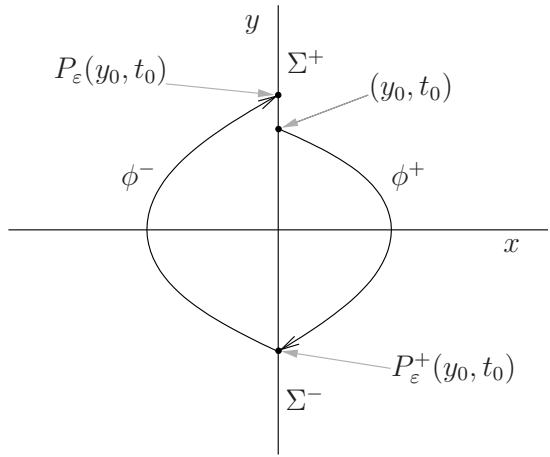}
\end{center}
\caption{Poincar\'e impact map (\ref{eq:poincare_map}) represented schematically.}
\label{fig:poincare_map}
\end{figure}%
Notice that, as assumed in C.4, for the unperturbed flow all initial
conditions in $\Sigma^+$ lead to periodic orbits surrounding the origin.
Hence, we can give a closed expression for $P_0$, the Poincar\'e impact map when
$\varepsilon=0$. Let
\begin{equation}
\alpha^\pm(\pm y)=\pm2\mspace{-30mu}\mathop{\int}_0^{(V^\pm)^{-1}(h)}
\mspace{-30mu}\frac{1}{\sqrt{2(h-V^\pm(x))}}dx,\quad h=H_0(0,\pm y)=\frac{y^2}{2}
\label{eq:half_period_unperturbed_orbit}
\end{equation}
be the time needed by an orbit of the unperturbed system with initial condition
$(0,\pm y)\in\Sigma^\pm$ to reach $\Sigma^\mp$. In the unperturbed case, the
orbit with initial condition $(0,y)\in\Sigma^+$ has period
\begin{equation}
\alpha(y)=\alpha^+(y)+\alpha^-(-y).
\label{eq:period_unperturbed_orbit}
\end{equation}
Then the Poincar\'e impact map when $\varepsilon=0$ is defined in the
whole $\tS^+$, and can be written as
\begin{equation}
P_0(y_0,t_0)=(y_0,t_0+\alpha(y_0)).
\label{eq:unperturbed_impact_map1}
\end{equation}
Thus, if $\varepsilon$ is small enough, the perturbed trajectories starting at
$\tS^+$ cross $\tS^+$ again. The Poincar\'e impact map is well defined, and
is as smooth as the flow restricted to $S^+$ and $S^-$.\\
Note that in the symmetric case, $V^+(x)=V^-(-x)$,
$\alpha^+(y)=\alpha^-(-y)$ is half the period of the unperturbed periodic
orbit with initial condition $(0,y)\in\Sigma^+$.

\subsection{Coefficient of restitution}\label{sec:rest_coeff}
As the name of the previous map suggests, it is typically used to deal with
systems with impacts, as is the case of the mechanical example of section
\ref{sec:rocking_block}. In order to include the loss of energy at the impact,
one considers a coefficient of restitution, $r\in(0,1]$, that reduces the velocity,
$y$, at every impact. More precisely, if a trajectory crosses $\Sigma$
transversally at some point $(0,y_B)$ at $t=t_B$, then the state is replaced by
$(0,ry_B)$ at a later time $t_A$ to proceed with the evolution of the system. In
other words, the system slides along $\Sigma$ from $(0,y_B)$ to
$(0,ry_B)$ during time $t_A-t_B$ and
\begin{equation}
y(t_A)=ry(t_B).
\label{eq:restitution_coefficient}
\end{equation}
For the rest of this article we will assume that the loss of energy is produced
instantaneously and hence $t_A=t_B$. Thus, there is no sliding along $\Sigma$
and the trajectory jumps from $(0,y_B)$ to $(0,ry_B)$.\\

Clearly, when such a condition is introduced to a system of the type
(\ref{eq:general_field}), the unperturbed system ($\varepsilon=0$) is no longer
conservative, the origin becomes a global attractor and none of the conditions
\CondsC~hold. In particular, the orbits with initial conditions on the unstable
manifolds $W^u(z^-)$ and $W^u(z^+)$ tend to the origin and can not intersect the
stable manifolds $W^s(z^+)$ and $W^s(z^-)$, respectively (see
Fig.~\ref{fig:phase_portrait_unperturbed_dissipative}).
\begin{figure}
\begin{center}
\includegraphics[width=0.8\textwidth]{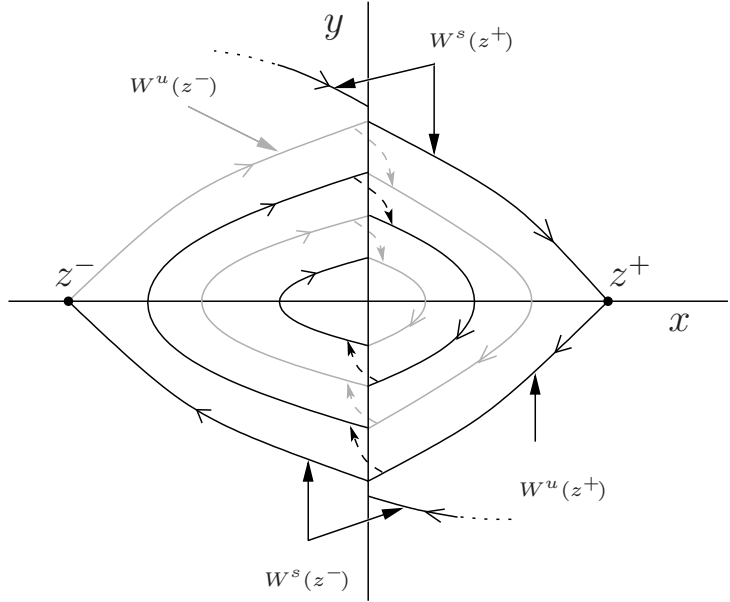}
\end{center}
\caption{Stable and unstable manifolds of system (\ref{eq:general_field}) for
$r<1$ and $\varepsilon=0$.}
\label{fig:phase_portrait_unperturbed_dissipative}
\end{figure}
\begin{figure}
\begin{center}
\includegraphics[width=0.6\textwidth]{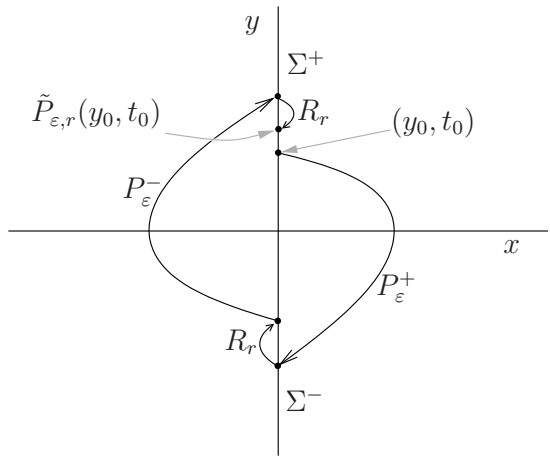}
\end{center}
\caption{Impact map for $r<1$ and $\varepsilon>0$.}
\label{fig:poincare_map_dissipative}
\end{figure}

Although periodic orbits surrounding the origin are not possible for the
unperturbed case if $r<1$, they may exist if $\varepsilon>0$. However,
roughly speaking, as these orbits will have to overcome the loss of energy, the
magnitude of the forcing will not be allowed to be arbitrarily small. We will
make a precise statement of this fact in \S\ref{sec:dissipative_subharmonic}
(see also \cite{Hog89}).\\

To study the existence of periodic orbits we will use again the impact map,
which can also be defined for $r<1$ as (see
Fig.~\ref{fig:poincare_map_dissipative})
\begin{equation}
\tP_{\varepsilon,r}(y_0,t_0):=R_r\circ P_\varepsilon^-\circ R_r\circ P_\varepsilon^+(y_0,t_0)
\label{eq:dissipative_impact_map}
\end{equation}
where
\begin{equation*}
R_r(y_0,t_0)=(ry_0,t_0).
\end{equation*}
Note that $\tP_{\varepsilon,r}$ is as smooth as the flow restricted to $S^\pm$,
since it is the composition of smooth maps.\\

Using Eqs.~(\ref{eq:half_period_unperturbed_orbit}) and
(\ref{eq:period_unperturbed_orbit}), the impact map, $\tP_{\varepsilon,r}$, for
$\varepsilon=0$ and $r<1$ can be written as
\begin{equation}
\tP_{0,r}(y_0,t_0)=\left(r^2y_0,t_0+\alpha^+\left(y_0\right)+\alpha^-(-ry_0)\right).
\label{eq:unperturbed_impact_map2}
\end{equation}
Note that, for any $\varepsilon>0$,
\begin{equation*}
\tP_{\varepsilon,1}(y_0,t_0)=P_\varepsilon(y_0,t_0).
\end{equation*}

\subsection{Some formal definitions and notation}\label{sec:definitions}
Up to now, we have considered separately the solutions of system
(\ref{eq:general_field}) in $S^+$ and $S^-$until they reach the
switching manifold $\Sigma$. Given an initial condition $(x_0,y_0,t_0)$, one can
extend the definition of a solution, $\phi(t;t_0,x_0,y_0,\varepsilon,r)$, of
system (\ref{eq:general_field}),(\ref{eq:restitution_coefficient}) for all $t\ge
t_0$ by properly concatenating $\phi^+$ or $\phi^-$ whenever the flow crosses
$\Sigma$ transversally.  Depending on the sign of $x_0$, one applies either
$\phi^+(t;t_0,x_0,y_0,\varepsilon)$ or $\phi^-(t;t_0,x_0,y_0,\varepsilon)$ until
the trajectory reaches $\Sigma$, and then one
applies~(\ref{eq:restitution_coefficient}). If $x_0=0$, one proceeds similarly
depending on the sign of $y_0$. This is because $\dot{x}=y+O(\varepsilon)$ is
always an equation of the flow and the orbits twist clockwise.

In this work, we will mainly use solutions with initial conditions
$(0,y_0,t_0)\in\tS^+$. In that case,  we define the sequence of impacts
$(0,y_{\varepsilon,r}^i,t_{\varepsilon,r}^i)$ (see
Fig.~\ref{fig:sequence_of_impacts}), if they exist, as
\begin{equation}
(y^i_{\varepsilon,r},t^i_{\varepsilon,r})=
\left\{ 
\begin{array}{ll}
R_r\circ
P_\varepsilon^-(y^{i-1}_{\varepsilon,r},t^{i-1}_{\varepsilon,r})&\text{if
}y^{i-1}_{\varepsilon,r}<0\\
R_r\circ
P_\varepsilon^+(y^{i-1}_{\varepsilon,r},t^{i-1}_{\varepsilon,r})&\text{if
}y^{i-1}_{\varepsilon,r}>0
\end{array}
\right.,
\label{eq:impact_sequence}
\end{equation}
with $(y^0_{\varepsilon,r},t^0_{\varepsilon,r})=(y_0,t_0)$ and
$P_\varepsilon^\pm$ defined in~(\ref{eq:Pp}) and~(\ref{eq:Pm}). Notice that the
sequence (\ref{eq:impact_sequence}) will be finite if the flow reaches
$\Sigma$ a finite number of times only.\\
For the unperturbed case, for any point $\left( 0,y_0,t_0 \right)\in\tS^+$, the
sequence (\ref{eq:impact_sequence}) becomes
\begin{equation}
(y^i_{0,r},t^i_{0,r}):=\left\{
\begin{array}{ll}
\left( r^{i}y_0,t_{0,r}^{i-1}+\alpha^-\left(
-r^{i-1}y_0\right)\right)&\text{if }i\ge2\text{ even}\\
\left(-r^{i}y_0,t_{0,r}^{i-1}+\alpha^+\left(
r^{i-1}y_0\right)\right)&\text{if }i\ge1\text{ odd}
\end{array}
\right.,
\label{eq:unperturbed_impact_sequence}
\end{equation}
where $\alpha^\pm$ are defined in
Eq.~(\ref{eq:half_period_unperturbed_orbit}).\\
\begin{figure}
\begin{center}
\includegraphics[width=0.6\textwidth]{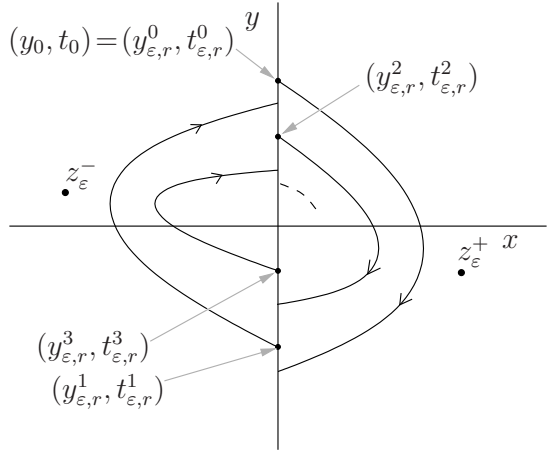}
\end{center}
\caption{Sequence of impacts for $r<1$ and $\varepsilon>0$.}
\label{fig:sequence_of_impacts}
\end{figure}%
Once the impacts $(y_{\varepsilon,r}^i,t_{\varepsilon,r}^i)$ are defined, the
solution of the non-autonomous system
\linebreak(\ref{eq:general_field}),(\ref{eq:restitution_coefficient}) with
initial condition $(0,y_0,t_0)\in\tS^+$ is given as
\begin{equation}
\phi(t;t_0,0,y_0,\varepsilon,r):=\left\{
\begin{array}{ll}
\phi^+(t;t_{\varepsilon,r}^{2i},0,y_{\varepsilon,r}^{2i},\varepsilon)&\text{if
}t_{\varepsilon,r}^{2i}\le t <t^{2i+1}_{\varepsilon,r}\\
\\
\phi^-(t;t_{\varepsilon,r}^{2i+1},0,y_{\varepsilon,r}^{2i+1},\varepsilon)&\text{if
}t_{\varepsilon,r}^{2i+1}\le t
<t_{\varepsilon,r}^{2i+2}
\end{array}
\right.,\;i\ge0.
\label{eq:general_solution}
\end{equation}
Note that in the case when the number of impacts is finite, we take the
last interval of time to be infinitely long.\\

In the rest of the paper we will generally distinguish between the
conservative ($r=1$) and dissipative ($r<1$) cases. We will omit the parameter
$r$ in the flow $\phi$ whenever we refer to $r=1$.\\
Note that we have only defined the solution of the system for an initial
condition $(0,y_0,t_0)\in\tS^+$. Given $(0,y_0,t_0)\in\tS^-$, one defines
similarly this solution by just properly shifting the subscripts of
$t_\varepsilon^i$ in (\ref{eq:general_solution}). In addition, it is
possible to extend precisely this definition to an arbitrarily initial condition
$(x_0,y_0,t_0)$.\\

As is usual when dealing with Hamiltonian systems, we will use the
unperturbed Hamiltonian to measure the distance between states. In addition, as
we are dealing with a perturbation problem, we will frequently use
expansions in powers of $\varepsilon$. In this case, the integral of the Poisson
brackets of the Hamiltonians $H_1$ and $H_0$ typically provides a compact
expression for the linear terms in $\varepsilon$. Given $m\ge1$,
$(0,y_0,t_0)\in\tS^+$ and its impact sequence
$(0,y^i_{\varepsilon,r},t_{\varepsilon,r}^i)$, $0\le i\le 2m$, for the non-smooth
system (\ref{eq:general_field}),(\ref{eq:restitution_coefficient}) when $r\le
1$, we introduce
\begin{equation}
\begin{aligned}
\int_{t_0}^{t_{\varepsilon,r}^{2m}}\left\{ H_0,H_1 \right\}&\left( \phi\left(
t;t_0,0,y_0,\varepsilon,r \right),t \right)dt\\
&:=\sum_{i=0}^{m-1}\Bigg(\int_{t_{\varepsilon,r}^{2i}}^{t_{\varepsilon,r}^{2i+1}}\left\{
H_0^+,H_1^+ \right\}\left(
\phi^+(t;t_{\varepsilon,r}^{2i},0,y_{\varepsilon,r}^{2i},\varepsilon),t
\right)dt\\
&+\int_{t_{\varepsilon,r}^{2i+1}}^{t^{2i+2}_{\varepsilon,r}}\left\{ H_0^-,H_1^- \right\}\left( \phi^-(
t;t_{\varepsilon,r}^{2i+1},0,y_{\varepsilon,r}^{2i+1},\varepsilon),t
\right)dt\Bigg)
\end{aligned}
\label{eq:poisson_brackets_integral}
\end{equation}
where $\left\{ Q\left( x,y \right),R\left( x,y \right) \right\}=\frac{\partial
Q}{\partial x}\frac{\partial R}{\partial y}-\frac{\partial Q}{\partial
y}\frac{\partial R}{\partial x}$ is the usual Poisson bracket of the
Hamiltonians $Q$ and $R$.\\

The next Lemma provides an expression for $H_0\left( \phi\left(
t^{2m}_{\varepsilon,r};t_0,0,y_0,\varepsilon,r \right) \right)$ which we will
use below. 
\begin{lem}
Let $m\ge1$ and $(0,y_0,t_0)\in\tS^+$, and let
$(0,y^i_{\varepsilon,r},t^i_{\varepsilon,r})$, $i=0,\dots,2m$, be its associated
impact sequence as defined in (\ref{eq:impact_sequence}). Then,
\begin{equation}
\begin{aligned}
H_0(0,y_{\varepsilon,r}^{2m})&-H_0\left( 0,y_0
\right)=
r^2\Bigg[\varepsilon \int_{t_0}^{t_{\varepsilon,r}^{2m}}\left\{ H_0,H_1 \right\}\left( \phi\left(
t;t_0,0,y_0,\varepsilon,r \right),t
\right)dt\\
&+\sum_{i=0}^{2m-1}\left( H_0\left(0,y^{i}_{\varepsilon,r}\right)-H_0\left(
0,\frac{y^{i}_{\varepsilon,r}}{r}
\right) \right)\Bigg].
\end{aligned}
\label{eq:barrows_formula1}
\end{equation}
\end{lem}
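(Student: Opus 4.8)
The plan is to track the unperturbed energy $H_0$ along a single trajectory generated by the impact sequence: between consecutive impacts it evolves smoothly, driven only by the perturbation, while at each impact it jumps through the restitution rule. Combining these two effects yields a one-step recursion for the energy at successive impacts, which I then sum. For the smooth part, on any segment lying in $S^+$ the flow satisfies $\dot\phi^+=\X_0^+(\phi^+)+\varepsilon\X_1^+(\phi^+,t)=J\nabla(H_0^++\varepsilon H_1^+)$ by (\ref{eq:hamiltonian_properties}), so the chain rule and the definition of the Poisson bracket give
\begin{equation*}
\frac{d}{dt}H_0^+\big(\phi^+(t)\big)=\nabla H_0^+\cdot J\nabla(H_0^++\varepsilon H_1^+)=\{H_0^+,H_0^+\}+\varepsilon\{H_0^+,H_1^+\}=\varepsilon\{H_0^+,H_1^+\},
\end{equation*}
since $\{H_0^+,H_0^+\}=0$; the identical computation holds in $S^-$. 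Thus $H_0$ moves along each smooth piece at the exact rate $\varepsilon\{H_0,H_1\}$.

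Next I would integrate over one segment. Since $y^i_{\varepsilon,r}$ is, by (\ref{eq:impact_sequence}), the velocity \emph{after} the restitution $R_r$ at the $i$-th impact, the segment from impact $i$ to impact $i+1$ starts at $(0,y^i_{\varepsilon,r},t^i_{\varepsilon,r})$ and reaches $\Sigma$ at time $t^{i+1}_{\varepsilon,r}$ with the \emph{pre}-restitution velocity $y^{i+1}_{\varepsilon,r}/r$. Integrating the identity above over $[t^i_{\varepsilon,r},t^{i+1}_{\varepsilon,r}]$ and writing $I_i$ for the corresponding Poisson-bracket integral gives $H_0\big(0,y^{i+1}_{\varepsilon,r}/r\big)-H_0\big(0,y^i_{\varepsilon,r}\big)=\varepsilon I_i$. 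Because $H_0(0,y)=y^2/2$ (by C.2 and $V^+(0)=V^-(0)$), the restitution $y^{i+1}_{\varepsilon,r}=r\cdot(y^{i+1}_{\varepsilon,r}/r)$ forces $H_0(0,y^{i+1}_{\varepsilon,r})=r^2H_0(0,y^{i+1}_{\varepsilon,r}/r)$. Abbreviating $h^i:=H_0(0,y^i_{\varepsilon,r})$ and combining the two relations produces the recursion
\begin{equation*}
h^{i+1}=r^2\big(h^i+\varepsilon I_i\big),\qquad i=0,\dots,2m-1.
\end{equation*}

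Finally I would telescope. Rewriting the recursion as $h^{i+1}-h^i=(r^2-1)h^i+r^2\varepsilon I_i$ and summing over $i=0,\dots,2m-1$ collapses the left-hand side to $h^{2m}-h^0=H_0(0,y^{2m}_{\varepsilon,r})-H_0(0,y_0)$, while $\sum_i I_i$ reassembles, by the very definition (\ref{eq:poisson_brackets_integral}), into the single integral $\int_{t_0}^{t^{2m}_{\varepsilon,r}}\{H_0,H_1\}\,dt$. It then remains only to observe that $(r^2-1)h^i=r^2\big(h^i-h^i/r^2\big)=r^2\big(H_0(0,y^i_{\varepsilon,r})-H_0(0,y^i_{\varepsilon,r}/r)\big)$, which recasts the collected terms into exactly the form claimed in (\ref{eq:barrows_formula1}), with the common factor $r^2$ out front.

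The one genuine bookkeeping hazard—and the step I would treat most carefully—is the pre-/post-restitution distinction: one must consistently read $y^i_{\varepsilon,r}$ as a \emph{post}-impact velocity so that the flow segment terminates at the \emph{pre}-impact value $y^{i+1}_{\varepsilon,r}/r$, ensuring the factor $r^2$ enters through the energy drop at each impact and not through the continuous evolution. As a sanity check, setting $r=1$ annihilates every restitution term and recovers the standard smooth identity $H_0(\text{end})-H_0(\text{start})=\varepsilon\int\{H_0,H_1\}\,dt$, consistent with the conservative case.
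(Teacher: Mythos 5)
Your proof is correct and takes essentially the same route as the paper's: the fundamental theorem of calculus on each smooth segment, where $\frac{d}{dt}H_0^\pm=\varepsilon\{H_0^\pm,H_1^\pm\}$ follows from the Hamiltonian structure, combined with the energy jump $H_0(0,ry)=r^2H_0(0,y)$ at each impact (valid since $H_0(0,y)=y^2/2$ by C.2), then assembled over the $2m$ segments. Your explicit one-step recursion and telescoping sum, with the post-impact reading of $y^i_{\varepsilon,r}$, is exactly the bookkeeping the paper compresses into its remark about ``taking into account the intermediate gaps induced by the impact condition.''
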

\begin{proof}
The proof of this Lemma comes from a straightforward application of the
fundamental theorem of calculus to the smooth functions $H_0^\pm\left(
\phi^\pm\left( t;t_0,0,\pm y_0,\varepsilon \right) \right)$, using the fact that
\begin{align*}
H_0(0,y_{\varepsilon,r}^{2m})&=H_0\left( r\phi\left(
t_{\varepsilon,r}^{2m};t_0,0,y_0,\varepsilon,r
\right) \right)\\
&=r^2H_0\left(\phi\left(
t_{\varepsilon,r}^{2m};t_0,0,y_0,\varepsilon,r
\right) \right),
\end{align*}
taking into account the intermediate gaps induced by the impact
condition~(\ref{eq:restitution_coefficient}) and using the fact that
\begin{equation*}
\frac{d}{dt}H_0^\pm\left( \phi^\pm\left( t;t^*,x^*,y^*,\varepsilon \right)
\right)=\varepsilon\left\{ H_0^\pm,H_1^\pm \right\}\left( \phi^\pm\left(
t;t^*,x^*,y^*,\varepsilon
\right) \right)
\end{equation*}
for any $(x^*,y^*)\in S^\pm\cup\Sigma^\pm$ and $t\ge t^*$ such that $\phi^\pm\left(
t;t^*,x^*,y^*,\varepsilon \right)\in S^\pm$.
\end{proof}
The following Lemma gives us an expression for the expansion in powers of
$\varepsilon$  of $H_0(0,y_{\varepsilon,r}^{2m})-H_0\left( 0,y_0 \right)$
which we will use in \S\ref{sec:existence_of_subharmonic_orbits}.\\
\begin{lem}\label{lem:barrow}
Let $m\ge1$ and $(0,y_0,t_0)\in\tS^+$, and let
$(0,y^i_{\varepsilon,r},t^i_{\varepsilon,r})$, $i=0,\dots,2m$, be its associated
impact sequence as defined in (\ref{eq:impact_sequence}). Then, if
$\varepsilon\simeq0$, the Taylor expansion of expression
(\ref{eq:barrows_formula1}) becomes
\begin{equation}
\begin{aligned}
H_0(0,y_{\varepsilon,r}^{2m})-H_0\left( 0,y_0
\right)&=\frac{y_0^2}{2}(r^{4m}-1)+\varepsilon G^{m}(y_0,t_0)\\
&+O(\varepsilon^2)+O(\varepsilon\left( r-1
\right))
\end{aligned}
\label{eq:barrows_formula_expanded}
\end{equation}
where
\begin{equation}
G^{m}(y_0,t_0)=
\int_{0}^{m\alpha(y_0)}\left\{ H_0,H_1 \right\}\left( \phi\left(
t;0,0,y_0,0\right),t+t_0
\right)dt
\label{eq:G}
\end{equation}
and $\alpha(y_0)$ is given in~(\ref{eq:period_unperturbed_orbit}).
\end{lem}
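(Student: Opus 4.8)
The plan is to start from the exact identity (\ref{eq:barrows_formula1}) furnished by the previous Lemma and perform a \emph{joint} Taylor expansion in the two small quantities $\varepsilon$ and $(r-1)$, treating separately the two contributions inside the bracket. Write the right-hand side of (\ref{eq:barrows_formula1}) as $r^2\left[\varepsilon I(\varepsilon,r)+S(\varepsilon,r)\right]$, where $I(\varepsilon,r)=\int_{t_0}^{t^{2m}_{\varepsilon,r}}\{H_0,H_1\}(\phi(t;t_0,0,y_0,\varepsilon,r),t)\,dt$ is the integral of the Poisson bracket and $S(\varepsilon,r)=\sum_{i=0}^{2m-1}\left(H_0(0,y^i_{\varepsilon,r})-H_0(0,y^i_{\varepsilon,r}/r)\right)$ is the telescoping sum. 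Both $I$ and $S$ are smooth in $(\varepsilon,r)$ near $(0,1)$, since the impact times $t^i_{\varepsilon,r}$ depend smoothly on $(\varepsilon,r)$ by transversality of the crossings of $\Sigma$ and smoothness of $\phi^\pm$; hence each admits a Taylor expansion. Because the integral carries a prefactor $\varepsilon$, only its value at $\varepsilon=0$ matters at first order, so $\varepsilon I(\varepsilon,r)=\varepsilon I(0,r)+O(\varepsilon^2)$.

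First I would isolate the $\varepsilon^0$ term. At $\varepsilon=0$ the bracket reduces to $S(0,r)$, which I evaluate using the unperturbed impact sequence (\ref{eq:unperturbed_impact_sequence}): there $y^i_{0,r}=\pm r^i y_0$, so $H_0(0,y^i_{0,r})-H_0(0,y^i_{0,r}/r)=\tfrac{y_0^2}{2}(r^{2i}-r^{2i-2})$, independently of the sign. Summing over $0\le i\le 2m-1$ telescopes to $\tfrac{y_0^2}{2}(r^{4m-2}-r^{-2})$, and multiplying by the prefactor $r^2$ gives exactly $\tfrac{y_0^2}{2}(r^{4m}-1)$, the claimed leading term.

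Next I would extract the coefficient of $\varepsilon$. Collecting the first-order terms produces $\varepsilon\, r^2\left[I(0,r)+\partial_\varepsilon S(0,r)\right]$. The key observation is that $S(\varepsilon,1)\equiv 0$ for every $\varepsilon$, because at $r=1$ each summand $H_0(0,y^i_{\varepsilon,1})-H_0(0,y^i_{\varepsilon,1})$ vanishes identically; consequently $\partial_\varepsilon S(0,1)=0$ and, by smoothness, $\partial_\varepsilon S(0,r)=O(r-1)$. For the integral, $I(0,r)$ is taken along the unperturbed orbit, and as $r\to 1$ the dissipative orbit tends to the closed periodic orbit of period $\alpha(y_0)$ while the upper limit $t^{2m}_{0,r}$ tends to $t_0+m\alpha(y_0)$, so $I(0,r)=I(0,1)+O(r-1)$. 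Finally, since the $\varepsilon=0$ conservative flow is autonomous, the change of variables $s=t-t_0$ together with $\phi(s+t_0;t_0,0,y_0,0)=\phi(s;0,0,y_0,0)$ identifies $I(0,1)$ with $G^m(y_0,t_0)$ as defined in (\ref{eq:G}). Using $r^2=1+O(r-1)$, all the $r$-dependent remainders are absorbed into $O(\varepsilon(r-1))$, yielding (\ref{eq:barrows_formula_expanded}).

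The main point requiring care is the simultaneous bookkeeping of the two small parameters: one must recognise that the telescoping sum $S$, although $O(1)$ at $\varepsilon=0$ and responsible for the \emph{entire} factor $(r^{4m}-1)$, contributes nothing to the genuine $O(\varepsilon)$ term at $r=1$ and is therefore relegated to the mixed remainder $O(\varepsilon(r-1))$. The remaining technical ingredient is the smoothness of $I$ and $S$ in $(\varepsilon,r)$ near $(0,1)$, which follows from the smoothness of $\phi^\pm$ and the transversality of each crossing guaranteed by C.4, so that the implicit equations (\ref{eq:t1}),(\ref{eq:t2}) defining the impact times admit smooth solutions and the differentiations performed above are legitimate.
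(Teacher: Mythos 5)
Your proposal is correct and follows essentially the same route as the paper: both expand the exact identity (\ref{eq:barrows_formula1}) jointly in $\varepsilon$ and $r-1$, evaluate the telescoping sum at $\varepsilon=0$ via the unperturbed impact sequence (\ref{eq:unperturbed_impact_sequence}) to get $\tfrac{y_0^2}{2}(r^{4m}-1)$, discard the sum's $O(\varepsilon)$ contribution into the mixed remainder $O(\varepsilon(r-1))$ (the paper does this by factoring out $(r^2-1)$ explicitly, you by noting $S(\varepsilon,1)\equiv 0$ and invoking smoothness — the same fact), and identify the integral at $(\varepsilon,r)=(0,1)$ with $G^m(y_0,t_0)$ using $t^{2m}_{0,1}=t_0+m\alpha(y_0)$ and autonomy of the unperturbed flow. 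No gaps; your bookkeeping of the two small parameters matches the paper's argument.
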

\begin{proof}
The independent term of the expansion is found by noting that, if
$\varepsilon=0$, from expression~(\ref{eq:unperturbed_impact_sequence}), one has
$H_0(0,y^{i}_{0,r})=H_0(0,\frac{y^{i+1}_{0,r}}{r})$.  Hence all the terms in the
sum of Eq.~(\ref{eq:barrows_formula1}) cancel each other except for the first
and the last one. This, in combination with the fact that
$H_0(0,y)=\frac{y^2}{2}$, gives the first term in
Eq.~(\ref{eq:barrows_formula_expanded}). For the linear term in $\varepsilon$,
one first obtains 
\begin{align*}
&r^2\Bigg[
\int_{t_0}^{t_{\varepsilon,r}^{2m}}\left\{ H_0,H_1 \right\}\left( \phi\left(
t;t_0,0,y_0,0,r \right),t
\right)dt\\
&+(r^2-1)\sum_{i=1}^{2m-1}\left(
\frac{d}{d\varepsilon}\left(H_0\left( \left(
y_{\varepsilon,r}^i,t_{\varepsilon,r}^i \right) \right)\right)_{\vert
\varepsilon=0}\right)\Bigg].
\end{align*}
Then, by applying $m$ times Eq.~(\ref{eq:unperturbed_impact_map2}), one has that
$t_{\varepsilon,1}^{2m}=t_0+m\alpha(y_0)+O(\varepsilon)$. Thus, by expanding
this for $r$ near $1$ and $\varepsilon$ near $0$ and noting that the unperturbed
flow is autonomous and hence $\phi(t;t_0,0,y_0,0)=\phi(t-t_0;0,0,y_0,0)$, one
gets expression~(\ref{eq:G}).
\end{proof}
\begin{remark}\label{remark:r1}
If in Eq.~(\ref{eq:G}) we take $\alpha(y_0)=\frac{nT}{m}$, then we recover the classical
Melnikov function for the subharmonic orbits \cite{GucHol83} with the modified
integral~(\ref{eq:poisson_brackets_integral}).
\end{remark}

\section{Existence of subharmonic orbits}\label{sec:existence_of_subharmonic_orbits}
\subsection{Conservative case, $r=1$: Melnikov method for subharmonic orbits}\label{sec:conservative_subharmonic}
Let us consider system (\ref{eq:general_field}) neglecting the loss of energy at
impact ($r=1$ in Eq.~(\ref{eq:restitution_coefficient})). According to \CondsC,
for $\varepsilon=0$, this system possesses a continuum of periodic orbits,
$\Lambda_c$ in Eq.~(\ref{eq:periodic_orbits}), surrounding the origin. Our main
goal in this section is to investigate the persistence of these orbits when the
(periodic) non-autonomous perturbation is considered ($\varepsilon>0$). The
classical Melnikov method for subharmonic orbits, which here, in principle, does
not apply, provides sufficient conditions for the persistence of periodic orbits
for a smooth system with an equivalent, smooth, unperturbed phase portrait.

The period of the orbits $\Lambda_c$ tends to infinity as they approach the
heteroclinic orbit. More precisely, if $q_c(t)$ is the periodic orbit satisfying
$q_c(0)=(0,y_0)$ with $H_0(0,y_0)=c$, its period $\alpha(y_0)$ tends to infinity
as $c\to c_1$ (see formula~(\ref{eq:period_unperturbed_orbit})). As we are
interested in finding periodic orbits for $0<\varepsilon\ll1$, we will use the
unperturbed periodic solutions as $\varepsilon$-close approximations to them.
In general, such perturbation results are valid only for finite time and
therefore, from now on, we will restrict ourselves to a set of the form
\begin{equation}
\tilde{\Sigma}^+_{\tc}=\left\{ (0,y,t)\in\tS^+\,\vert\, 0\le y \le
\tc \right\},
\label{eq:compact_set}
\end{equation}
for a fixed $\tc$ satisfying $0\le \tilde{c}<\sqrt{2c_1}$. Note that, if
$(0,y_0,t_0)\in \tS_{\tc}^+$ then $\alpha(y_0)$ is uniformly bounded
($\alpha(y_0)<\alpha(\tc)$). However, following \cite{GucHol83}, it is also
possible to extend the method for all the periodic orbits up to the heteroclinic
connection.\\

To look for periodic orbits we will use the impact map defined
in~(\ref{eq:poincare_map}).  In terms of this map, a point in $U\subset\tS^+$ will lead
to a periodic orbit of period $nT$ if it is a solution of the equation
\begin{equation}
P^m_\varepsilon(y_0,t_0)=(y_0,t_0+nT),
\label{eq:periodic_orbit_condition}
\end{equation}
for some $m$. We take $m$ to be the smallest integer such that
(\ref{eq:periodic_orbit_condition}) is satisfied. In that case, $\phi\left(
t;t_0,0,y_0,\varepsilon \right)$ will be a periodic orbit of period $nT$, which
crosses the switching manifold $\Sigma$ exactly $2m$ times. We will call this an
$(n,m)$-periodic orbit. Then for $(n,m)$-periodic orbits
with $\varepsilon>0$ we have the following result analogous to the smooth case 
\begin{thm}\label{theo:subharmonic_melnikov_conservative}
Consider a system as defined in (\ref{eq:general_field}) satisfying \CondsC, and
let $\alpha(y_0)$ be the function defined
in~(\ref{eq:half_period_unperturbed_orbit})-(\ref{eq:period_unperturbed_orbit}).
Assume that the point $(0,\by,\bt)\in\tS_{\tc}^+$ satisfies
\begin{enumerate}[H.1]
\item $\alpha(\by)=\frac{n}{m}T$, with $n,m\in\mathbb{Z}$ relatively prime
\item $\bt\in[0,T]$ is a simple zero of 
\begin{equation}
M^{n,m}(t_0)=\int_0^{nT}\left\{ H_0,H_1
\right\}
(q_{c}(t),t+t_0)
dt,\;c=H_0(0,\by),
\label{eq:subharmonic_Melnikov_function}
\end{equation}
\end{enumerate}
where $q_c(t)=\phi\left( t;0,0,\by \right)$ is the periodic orbit such that
$\alpha(\by)=\frac{nT}{m}$.\\
Then, there exists $\varepsilon_0$ such that, for every
$0 < \varepsilon < \varepsilon_0$, one can find $y_0^*$ and $t_0^*$ such that
$\phi(t;t_0^*,0,y_0^*,\varepsilon)$ is an $(n,m)$-periodic orbit.
\end{thm}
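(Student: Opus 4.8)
The plan is to recast the periodicity condition~(\ref{eq:periodic_orbit_condition}) as a zero-finding problem and apply the implicit function theorem, the only subtlety being the degeneracy caused by the fact that the unperturbed impact map preserves the $y$-coordinate. Writing $P_\varepsilon^m(y_0,t_0)=\big(Y(y_0,t_0,\varepsilon),\mathcal{T}(y_0,t_0,\varepsilon)\big)$, which is smooth in all its arguments because $P_\varepsilon$ is, I would introduce the displacement $F=(F_1,F_2)$ with $F_1=Y-y_0$ and $F_2=\mathcal{T}-(t_0+nT)$, so that~(\ref{eq:periodic_orbit_condition}) reads $F=0$. From~(\ref{eq:unperturbed_impact_map1}) one has $P_0^m(y_0,t_0)=(y_0,t_0+m\alpha(y_0))$, hence $F_1(y_0,t_0,0)\equiv 0$ and $F_2(y_0,t_0,0)=m\alpha(y_0)-nT$. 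By H.1, $F_2(\by,\bt,0)=0$; by H.5 (strictly positive derivative of the period), $\partial_{y_0}F_2(\by,\cdot,0)=m\alpha'(\by)\neq 0$.

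Because $F_1$ vanishes identically at $\varepsilon=0$, a direct application of the implicit function theorem to $F=0$ fails, since the linearization in $(y_0,t_0)$ is singular there. The remedy is a two-step reduction. First, since $\partial_{y_0}F_2\neq 0$, I solve $F_2=0$ for $y_0=\hat y_0(t_0,\varepsilon)$ near $(\by,\bt,0)$ with $\hat y_0(t_0,0)=\by$. Substituting, I consider the reduced map $g(t_0,\varepsilon):=F_1(\hat y_0(t_0,\varepsilon),t_0,\varepsilon)$, which still satisfies $g(t_0,0)\equiv 0$; by Hadamard's lemma I factor $g=\varepsilon\,\bar g$ with $\bar g$ smooth and $\bar g(t_0,0)=\partial_\varepsilon g(t_0,0)$, so the problem reduces to $\bar g(t_0,\varepsilon)=0$.

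The key computation is to identify $\bar g(t_0,0)$ with the subharmonic Melnikov function. The chain rule gives $\partial_\varepsilon g(t_0,0)=\partial_{y_0}F_1(\by,t_0,0)\,\partial_\varepsilon\hat y_0(t_0,0)+\partial_\varepsilon F_1(\by,t_0,0)$, and the first term drops out: since $Y(y_0,t_0,0)=y_0$ we have $\partial_{y_0}F_1(\cdot,\cdot,0)=0$, which is precisely the cancellation that keeps the Melnikov function uncontaminated by the $\hat y_0$ dependence. For the second term I invoke Lemma~\ref{lem:barrow} with $r=1$: from $H_0(0,Y)-H_0(0,y_0)=\varepsilon\,G^{m}(y_0,t_0)+O(\varepsilon^2)$ and $H_0(0,y)=y^2/2$, differentiating in $\varepsilon$ yields $\partial_\varepsilon Y(\by,t_0,0)=G^{m}(\by,t_0)/\by$. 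By H.1 together with Remark~\ref{remark:r1}, $G^{m}(\by,t_0)=M^{n,m}(t_0)$, so $\bar g(t_0,0)=M^{n,m}(t_0)/\by$.

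Finally I apply the implicit function theorem to $\bar g(t_0,\varepsilon)=0$ at $(\bt,0)$: by H.2, $\bar g(\bt,0)=M^{n,m}(\bt)/\by=0$ and $\partial_{t_0}\bar g(\bt,0)=(M^{n,m})'(\bt)/\by\neq 0$, because $\bt$ is a simple zero. This produces $\varepsilon_0>0$ and a smooth branch $t_0^*(\varepsilon)$ with $t_0^*(0)=\bt$ solving $\bar g=0$; setting $y_0^*(\varepsilon)=\hat y_0(t_0^*(\varepsilon),\varepsilon)\to\by>0$ recovers a genuine solution of~(\ref{eq:periodic_orbit_condition}) lying in $\tS_{\tc}^+$ for all $0<\varepsilon<\varepsilon_0$, and the relative primality of $n$ and $m$ ensures that $m$ is the minimal number of double crossings, so the orbit is truly $(n,m)$-periodic rather than a lower iterate. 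I expect the only real obstacle to be the degeneracy of $F_1$ at $\varepsilon=0$ and the attendant need to verify the $\partial_{y_0}F_1=0$ cancellation carefully; the remainder is routine implicit-function-theorem bookkeeping, with smoothness of the impact map supplying the regularity needed at each step.
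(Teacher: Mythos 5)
Your proposal is correct and is in substance the same proof as the paper's: both recast the periodicity condition~(\ref{eq:periodic_orbit_condition}) as a zero-finding problem, use Lemma~\ref{lem:barrow} (with $r=1$) to identify the $O(\varepsilon)$ term of the return map with $G^{m}$, hence with $M^{n,m}$ once H.1 fixes $m\alpha(\by)=nT$, and close the argument with the implicit function theorem, the nondegeneracy coming exactly from C.5 ($\alpha'(\by)\neq 0$ --- note your ``H.5'' is the paper's C.5) together with the simplicity of the zero $\bt$. The only difference is bookkeeping: the paper measures the gap in the $y$-component by $H_0$, divides that component by $\varepsilon$, and applies a single two-dimensional implicit function theorem to $F_{n,m}=\bigl(G^{m}(y_0,t_0)+O(\varepsilon),\,m\alpha(y_0)-nT+O(\varepsilon)\bigr)$, whose Jacobian determinant $-m\alpha'(\by)\,\partial_{t_0}G^{m}(\by,\bt)$ encodes implicitly the very cancellation you verify explicitly ($\partial_{y_0}F_1\vert_{\varepsilon=0}=0$) in your sequential reduction --- solve the time equation for $\widehat{y}_0(t_0,\varepsilon)$, factor out $\varepsilon$ by Hadamard's lemma, then a one-dimensional implicit function theorem --- so the two arguments are equivalent, with yours slightly more explicit about why the degeneracy is harmless (and about why the resulting orbit is genuinely $(n,m)$-periodic, a point the paper passes over).
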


\begin{proof}
The proof of the result comes from a straightforward application of the implicit
function theorem to equation~(\ref{eq:periodic_orbit_condition}). Let us fix
$n$ and $m$ relatively prime. We replace equation
(\ref{eq:periodic_orbit_condition}) by
\begin{equation}
\left( 
\begin{array}{c}
H_0\left( 0,\Pi_{y_0}(P_\varepsilon^m(y_0,t_0)) \right)\\
\Pi_{t_0}\left( P^m_\varepsilon(y_0,t_0) \right)
\end{array}
 \right)-
\left( 
\begin{array}{c}
H_0(0,y_0)\\ t_0+nT
\end{array}
\right)=\left( \begin{array}{c}
0\\0
\end{array} \right).
\label{eq:periodic_orbit_condition2}
\end{equation}
That is, we use the Hamitlonian $H_0$ to measure the distance between the points
$\left(0,\Pi_{y_0}\left(P^m_\varepsilon\left(y_0,t_0\right)\right)\right)$ and
$\left(0,y_0\right)$.\\
Using the second equation in~(\ref{eq:periodic_orbit_condition2}) we have
\begin{align*}
\Pi_{y_0}\left(P_\varepsilon^m\left(y_0,t_0\right)\right)&=\Pi_{y}\left(\phi(t_0+nT;t_0,0,y_0,\varepsilon)\right)\\
0&=\Pi_x\left( \phi\left(t_0+ nT;t_0,0,y_0,\varepsilon \right)
\right).
\end{align*}
This allows us to rewrite Eq.~(\ref{eq:periodic_orbit_condition2}) as
\begin{equation}
\left(
\begin{array}{c}
H_0(\phi(t_0+nT;t_0,0,y_0,\varepsilon))-H_0(0,y_0)\\
\Pi_{t_0}\left(P_{\varepsilon}^m\left(y_0,t_0\right)\right)-nT-t_0
\end{array}
\right)=
\left( \begin{array}{c}
0\\0
\end{array} \right).
\label{eq:periodic_orbit_condition3}
\end{equation}
We expand Eq.~(\ref{eq:periodic_orbit_condition3}) in powers of $\varepsilon$.
Using Eq.~(\ref{eq:unperturbed_impact_map1}), the second component
of~(\ref{eq:periodic_orbit_condition3}) becomes
\begin{equation}
\Pi_{t_0}\left( P^m_\varepsilon\left( y_0,t_0 \right)
\right)-t_0-nT=m\alpha(y_0)-nT+O(\varepsilon)=0,
\label{eq:second_component_periodic_orbit_condition}
\end{equation}
where $\alpha(y_0)$ is the period of the periodic orbit $q_{c}(t)$,
$c=H_0(0,y_0)$, given in Eq.~(\ref{eq:period_unperturbed_orbit}).\\
On the other hand, using Lemma~\ref{lem:barrow} and
noting that
\begin{equation*}
\Pi_{y_0}\left(P_\varepsilon^m \left( y_0,t_0
\right)\right)=y_{\varepsilon,1}^{2m},
\end{equation*}
the first equation in~(\ref{eq:periodic_orbit_condition3}) can be written as
\begin{align*}
H_0(0,\Pi_{y_0}&(P_\varepsilon^m(y_0,t_0)))-H_0(0,y_0)\\
&=\varepsilon\int_{0}^{m\alpha(y_0)}\left\{
H_0,H_1\right\}(\phi(t;0,0,y_0,0),t+t_0)dt+O(\varepsilon^2)\\
&=\varepsilon G^{m}(y_0,t_0)+O(\varepsilon^2).
\end{align*}
where $G^m(y_0,t_0)$ is given in~(\ref{eq:G}). Hence,
Eq.~(\ref{eq:periodic_orbit_condition3}) finally becomes
\begin{equation}
F_{n,m}(y_0,t_0,\varepsilon):=
\left(
\begin{array}{c}
G^{m}(y_0,t_0)+O(\varepsilon)\\
m\alpha(y_0)-nT+O(\varepsilon)\\
\end{array}
\right)=
\left( \begin{array}{c}
0\\0
\end{array} \right),
\label{eq:periodic_orbit_condition4}
\end{equation}
where the order in $\varepsilon$ of the first component has been reduced and,
thus, the implicit function theorem can be applied to
Eq.~(\ref{eq:periodic_orbit_condition4}). Therefore, one needs
\begin{enumerate}
\item $F_{n,m}(\by,\bt,0)=(0,0)^T$
\item $\det(D_{y_0,t_0}F(\by,\bt,0))\ne0$, where $D_{y_0,t_0} \equiv D$ 
is the Jacobian with respect to the variables $y_0$ and $t_0$.
\end{enumerate}
The first condition is satisfied by noting in
Eq.~(\ref{eq:periodic_orbit_condition4}) that $\by$ has to be such that
$\alpha(\by)=\frac{nT}{m}$ and $\bt$ a zero of the subharmonic Melnikov function
\begin{equation*}
M^{n,m}(t_0):=G^{m}(\by,t_0)=\int_{0}^{nT}\left\{
H_0,H_1\right\}(q_c(t),t+t_0)dt,
\end{equation*}
where $q_c(t)$, $c=H_0(0,\by)$, is the unperturbed periodic orbit of period
$\frac{nT}{m}$ such that $q_c(0)=(0,y_0)$, and therefore $q_c(t)=\phi\left(
t;0,0,\by,0 \right)$.\\
In addition, for $\varepsilon=0$, $DF_{n,m}$ is given by 
\begin{equation*}
DF_{n,m}(y_0,t_0,0)=
\left( 
\begin{array}{cc}
\frac{\partial G^{m}}{\partial y_0} & \frac{\partial G^{m}}{\partial t_0}\\
m\alpha'(y_0) & 0
\end{array}
 \right).
\end{equation*}
By C.5, $\alpha'(y_0)\ne0$,
and the second condition is satisfied if $\bt$ is a simple zero of the subharmonic
Melnikov function, $M^{n,m}(t_0)$, which completes hypothesis \emph{H.2}.\\
Finally, applying the implicit function theorem
to~(\ref{eq:periodic_orbit_condition4}) at $(y_0,t_0,\varepsilon)=(\by,\bt,0)$,
there exists $\varepsilon_0>0$ such that, if $0 < \varepsilon < \varepsilon_0$, then
there exist unique $y_0^*(\varepsilon)$ and $t_0^*(\varepsilon)$ solutions of
the equation (\ref{eq:periodic_orbit_condition2}), which have the form
\begin{align*}
y_0^*=\by+O(\varepsilon)\\
t_0^*=\bt+O(\varepsilon).
\end{align*}
Hence, the orbit $\phi\left( t;t^*_0,0,y^*_0,\varepsilon \right)$ is an
$(n,m)$-periodic orbit, as it has period $nT$ and impacts $2m$ times with the
switching manifold $\Sigma$ in every period.
\end{proof}
\begin{remark}\label{remark:epsilon_0}
The upper bound $\varepsilon_0$ given in the theorem depends on $n$ and $m$.
However, for every fixed $m$, it is possible to obtain
$\varepsilon_0(m)$, such that for $\varepsilon < \varepsilon_0(m)$, we can 
apply the theorem for all $n$ such that
$\alpha^{-1}(\frac{nT}{m})\in\tS_{\tc}^{+}$. This is because the
approximation of the perturbed flow by the unperturbed periodic orbit is
performed $m$ times beyond the period of the unperturbed periodic orbit.
\end{remark}
\begin{remark}\label{rem:algorithm}
The proof of the result provides us with a constructive method
to find the initial condition for $nT$-periodic orbits for $\varepsilon>0$.
\begin{enumerate}
\item Given $n$ and $m$, find $\by$ such that $\alpha(\by)=\frac{n}{m}T$ using
Eq.~(\ref{eq:period_unperturbed_orbit}).
\item Find $\bt$ such that $M^{n,m}(t_0)$ has a simple zero at $t_0=\bt$.
\item Use $(\by,\bt)$ as seed to solve Eq.~(\ref{eq:periodic_orbit_condition2})
numerically.
\end{enumerate}
\end{remark}
\begin{lem}
The subharmonic Melnikov function~(\ref{eq:subharmonic_Melnikov_function}) is
either identically zero or generically possesses at least one simple zero.
\end{lem}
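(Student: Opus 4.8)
The plan is to show that $M^{n,m}$ is a smooth $T$-periodic function of $t_0$ whose average over one period vanishes. Zero mean forces $M^{n,m}$ to change sign unless it is identically zero, and a sign change yields a zero that can be taken simple after a generic perturbation of $H_1$. First I would record two structural facts. Periodicity: since $H_1$ is $T$-periodic in time, $\{H_0,H_1\}(z,s+T)=\{H_0,H_1\}(z,s)$, so replacing $t_0$ by $t_0+T$ in (\ref{eq:subharmonic_Melnikov_function}) leaves the integrand unchanged and $M^{n,m}(t_0+T)=M^{n,m}(t_0)$. Smoothness in $t_0$ follows from the smoothness of $H_0^\pm,H_1^\pm$ and of the flow on each side, together with the transversality of the crossings assumed in C.4, which makes the crossing times depend smoothly on $t_0$; hence $M^{n,m}$ is at least $C^1$.

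The core of the argument is to prove $\int_0^T M^{n,m}(t_0)\,dt_0=0$. Writing out (\ref{eq:subharmonic_Melnikov_function}), applying Fubini, and substituting $\sigma=t+t_0$ in the inner integral, $T$-periodicity in time reduces the double integral to $\int_0^T\big(\int_0^{nT}\{H_0,H_1\}(q_c(t),\sigma)\,dt\big)\,d\sigma$, where $\sigma$ is frozen in the inner integral. For fixed $\sigma$, along the unperturbed orbit one has $\frac{d}{dt}H_1(q_c(t),\sigma)=\nabla H_1\cdot J\nabla H_0=-\{H_0,H_1\}(q_c(t),\sigma)$, so each inner integral telescopes by the fundamental theorem of calculus. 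Here I must use the splitting (\ref{eq:poisson_brackets_integral}): on the $2m$ smooth arcs of $q_c$ between consecutive transversal crossings the identity holds with $H_0^\pm,H_1^\pm$, and the boundary contributions at the interior crossings cancel because $H_1^+=H_1^-$ on $\Sigma$ (continuity of the perturbation). What survives is $-(H_1(q_c(nT),\sigma)-H_1(q_c(0),\sigma))$, which vanishes since $q_c$ has period $\alpha(\by)=nT/m$ and $nT$ is an integer multiple of it, so $q_c(nT)=q_c(0)$. Hence the inner integral is zero for every $\sigma$ and the mean of $M^{n,m}$ is zero.

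Finally I would conclude as follows. If $M^{n,m}\not\equiv0$, a continuous $T$-periodic function with zero mean must attain both positive and negative values, so by the intermediate value theorem it has a zero in $[0,T]$, and in fact a sign-changing zero of odd order. Such a zero is simple precisely when $(M^{n,m})'$ does not also vanish there; degenerate sign changes, i.e.\ the simultaneous vanishing $M^{n,m}=(M^{n,m})'=0$, form a non-generic (positive-codimension) condition, and since $M^{n,m}$ depends smoothly on $H_1$ an arbitrarily small perturbation of the perturbation makes the crossing transversal while preserving the change of sign. This produces a simple zero generically, completing the proof.

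I expect the main obstacle to be the telescoping step across $\Sigma$: one must verify that the endpoint terms generated by the fundamental theorem of calculus on the $2m$ arcs cancel exactly, which relies essentially on the $C^0$-matching $H_1^+(0,y,t)=H_1^-(0,y,t)$ and on the transversality in C.4 guaranteeing finitely many clean crossings. A secondary point is to phrase the genericity cleanly, exhibiting the degeneracy $M^{n,m}=(M^{n,m})'=0$ as a condition avoided by a residual set of perturbations $H_1$.
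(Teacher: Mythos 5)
Your proposal is correct and follows essentially the same route as the paper: both establish that $M^{n,m}$ has zero mean over $[0,T]$ by exchanging the order of integration and then writing the Poisson bracket as the total time derivative $-\frac{d}{dt}H_1(q_c(t),\cdot)$ along the unperturbed orbit, so that the integral telescopes over the $2m$ arcs, with the boundary terms cancelling by the continuity $H_1^+=H_1^-$ on $\Sigma$ and the periodicity $q_c(nT)=q_c(0)$. The only (cosmetic) difference is that you freeze the time argument $\sigma$ and telescope for each fixed $\sigma$, whereas the paper first averages $H_1$ over $t_0$ and telescopes with $\langle H_1\rangle$; your closing discussion of genericity of simple zeros is also slightly more explicit than the paper's, which leaves that point implicit.
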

\begin{proof}
The proof comes from the fact that $M^{n,m}(t_0)$ has average
\begin{equation*}
<M^{n,m}(t_0)>=\frac{1}{T}\int_0^TM^{n,m}(t_0)dt_0
\end{equation*}
equal to zero.
\begin{align*}
<M^{n,m}(t_0)>&=\frac{1}{T}\int_0^T\int_0^{nT}\left\{ H_0,H_1 \right\}\left( q_c(t),t+t_0
\right)dtdt_0\\
&=\frac{1}{T}\int_0^{nT}\int_0^T\left\{ H_0,H_1 \right\}\left( q_c(t),t+t_0 \right)dt_0dt\\
&=\int_0^{nT}\left\{ H_0,<H_1> \right\}(q_c(t))dt.
\end{align*}
Recalling that $\alpha(y_0)=\frac{nT}{m}$ (see
(\ref{eq:half_period_unperturbed_orbit})-(\ref{eq:period_unperturbed_orbit})) and letting
\begin{align*}
&q_c^+(t)=\phi^+(t;0,0,y_0,0)\\
&q_c^-(t)=\phi^-(t;\alpha^+(y_0),0,-y_0,0),
\end{align*}
$<M^{n,m}(t_0)>$ can be written as
\begin{align*}
&m\left(\int_0^{\alpha^+(y_0)}\left\{ H_0^+,<H_1^+> \right\}\left( q_c^+(t)
\right)dt+\int_{\alpha^+(y_0)}^{\frac{nT}{m}}\left\{ H_0^-,<H_1^->
\right\}\left( q_c^-(t) \right)dt\right)\\
&=-m\left(\int_0^{\alpha^+(y_0)}\frac{d}{dt}\left( <H_1^+>\left( q_c^+(t) \right)
\right)dt +\int_{\alpha^+(y_0)}^{\frac{nT}{m}}\frac{d}{dt}\left( <H_1^->\left(
q_c^-(t) \right)\right)dt\right)\\
&=-m\Bigg( <H_1^+>\left( q_c^+(\alpha^+(y_0))\right)-<H_1^+>\left(q_c^+(0)\right)\\
&+<H_1^->\left(q_c^-({\textstyle\frac{nT}{m}})\right)-<H_1^->\left(q_c^-\left(
\alpha^+(y_0)
\right)\right) \Bigg)=0.
\end{align*}
\end{proof}

Note that, if $M^{n,m}(t_0) \equiv 0$ then a second order analysis is required to study
the existence of periodic orbits.

\subsection{Dissipative case, $r<1$}\label{sec:dissipative_subharmonic}
We now focus on the situation when the coefficient of restitution $r$ introduced
in \S\ref{sec:rest_coeff} is considered. As already mentioned, for
$\varepsilon=0$ the origin is a global attractor and hence none of the periodic
orbits studied in the previous section exists if the amplitude of the
perturbation is small enough.  However, as was shown in \cite{Hog89} for the
rocking block model, for $\varepsilon$ large enough an infinite number periodic
orbits surrounding the origin can exist. This was studied analytically and
numerically for the rocking block model under symmetry assumptions for the
particular case $m=1$. Here, our goal is to relate the periodic orbits existing
for the dissipative case to those which exist for $r=1$ in the general system
(\ref{eq:general_field}),(\ref{eq:restitution_coefficient}). As will be shown
below, all the periodic orbits given by
Theorem~\ref{theo:subharmonic_melnikov_conservative} can also exist for the
dissipative case, when $r<1$ is small enough compared with $\varepsilon>0$. In
other words, we generalise in this section the result presented for the
conservative case.\\

As in \S\ref{sec:conservative_subharmonic}, in order to obtain the initial
conditions of a $(n,m)$-periodic orbit for $r<1$, one has to solve the equation
\begin{equation}
\tilde{P}^m_{\varepsilon,r}(y_0,t_0)=(y_0,t_0+nT),
\label{eq:dissipative_nm_periodic_orbit_condition}
\end{equation}
where $\tilde{P}_{\varepsilon,r}$, is defined in
Eq.~(\ref{eq:dissipative_impact_map}). The next result states that, under
certain conditions regarding $r$ and $\varepsilon$,
Eq.~(\ref{eq:dissipative_nm_periodic_orbit_condition}) can be solved.

\begin{thm}\label{theo:dissipative_subharmonic_orbits}
Consider system (\ref{eq:general_field}),(\ref{eq:restitution_coefficient}). Let
$(0,\by,\bt)\in\tS^+$ be such that $\alpha(\by)=\frac{nT}{m}$, with $n$ and $m$
relatively prime, and $\bt$ a simple zero of the subharmonic
Melnikov function~(\ref{eq:subharmonic_Melnikov_function}). There exists $\rho$
such that, given $\te,\tr>0$ satisfying $0<\frac{\tr}{\te}<\rho$, there exists
$\delta_0$ such that, if $\varepsilon=\te\delta$ and $r=1-\tr\delta$, then
$\forall\,0<\delta<\delta_0$ there exists $(y_0^*,t_0^*)$ which is a solution of
Eq.~(\ref{eq:dissipative_nm_periodic_orbit_condition}).  Moreover,
$y_0^*=\by+O(\delta)$, $t_0^*=\bt+O(\delta)+O(\tr/\te)$ and the
solution $(y_0^*,t_0^*)$ tends to the one given in
Theorem~\ref{theo:subharmonic_melnikov_conservative} as $r\to1^-$.
\end{thm}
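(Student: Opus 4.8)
The plan is to mirror the proof of Theorem~\ref{theo:subharmonic_melnikov_conservative}, turning the periodic orbit condition~(\ref{eq:dissipative_nm_periodic_orbit_condition}) into a root-finding problem solvable by the implicit function theorem, but now with the single continuation parameter $\delta$ obtained after setting $\varepsilon=\te\delta$ and $r=1-\tr\delta$. First I would again use $H_0$ to measure the displacement in the first coordinate, so that solving~(\ref{eq:dissipative_nm_periodic_orbit_condition}) is equivalent to the vanishing of
\begin{equation*}
F_{n,m}(y_0,t_0,\delta):=\left(\begin{array}{c} H_0(0,y^{2m}_{\varepsilon,r})-H_0(0,y_0)\\ \Pi_{t_0}(\tP^m_{\varepsilon,r}(y_0,t_0))-t_0-nT\end{array}\right),
\end{equation*}
where $(0,y^{2m}_{\varepsilon,r},t^{2m}_{\varepsilon,r})$ is the impact sequence evaluated at $\varepsilon=\te\delta$, $r=1-\tr\delta$.

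Next I would apply Lemma~\ref{lem:barrow} to the first component. Since $r^{4m}-1=-4m\tr\delta+O(\delta^2)$ and $\varepsilon(r-1)=O(\delta^2)$, formula~(\ref{eq:barrows_formula_expanded}) collapses to
\begin{equation*}
H_0(0,y^{2m}_{\varepsilon,r})-H_0(0,y_0)=\delta\left(\te\,G^m(y_0,t_0)-2m\tr\, y_0^2\right)+O(\delta^2),
\end{equation*}
with $G^m$ as in~(\ref{eq:G}); for the second component, expanding the time increment of~(\ref{eq:unperturbed_impact_map2}) iterated $m$ times about $r=1$, $\varepsilon=0$ gives $m\alpha(y_0)-nT+O(\delta)$. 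Because the first component vanishes identically at $\delta=0$ (there $\varepsilon=0$, $r=1$, and $H_0$ is conserved along the closed orbits, so $\tP^m_{0,1}=P_0^m$), I may divide it by $\delta$ as in Theorem~\ref{theo:subharmonic_melnikov_conservative} and work with the reduced, $\delta=0$--smooth map
\begin{equation*}
\tilde F_{n,m}(y_0,t_0,\delta):=\left(\begin{array}{c}\te\,G^m(y_0,t_0)-2m\tr\, y_0^2+O(\delta)\\ m\alpha(y_0)-nT+O(\delta)\end{array}\right).
\end{equation*}

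The $\delta=0$ system is where the dissipative problem genuinely differs from the conservative one. Its second equation forces $\alpha(y_0)=\frac{nT}{m}$, which by C.5 has the unique solution $y_0=\by$; substituting into the first equation leaves $M^{n,m}(t_0)=\frac{2m\tr}{\te}\by^2$ rather than $M^{n,m}(t_0)=0$. As $\bt$ is a \emph{simple} zero of $M^{n,m}$, I would fix $\rho$ (depending only on $n,m$ and the system, through the local structure of $M^{n,m}$ and the value $\by$) so small that, whenever $0<\tr/\te<\rho$, the positive number $\frac{2m\tr}{\te}\by^2$ lies in a neighbourhood of $0$ on which $M^{n,m}$ is strictly monotone. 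This yields a unique root $t_0^{**}=\bt+O(\tr/\te)$; the constant $\rho$ is precisely the threshold beyond which the dissipation is too strong, relative to the forcing, for the balance $M^{n,m}(t_0)=\frac{2m\tr}{\te}\by^2$ to remain reachable near $\bt$.

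Finally I would check nondegeneracy at $(\by,t_0^{**},0)$: the Jacobian of $\tilde F_{n,m}$ in $(y_0,t_0)$ has determinant $-m\alpha'(\by)\,\te\,\partial_{t_0}G^m(\by,t_0^{**})=-m\alpha'(\by)\,\te\,(M^{n,m})'(t_0^{**})$, nonzero by C.5 and by the choice of $\rho$, which keeps $(M^{n,m})'$ bounded away from zero near $\bt$. The implicit function theorem then produces $\delta_0>0$ and solutions $y_0^*(\delta)=\by+O(\delta)$, $t_0^*(\delta)=t_0^{**}+O(\delta)=\bt+O(\tr/\te)+O(\delta)$ for all $0<\delta<\delta_0$; letting $\tr\to0$ (hence $r\to1^-$) sends $t_0^{**}\to\bt$ and recovers the solution of Theorem~\ref{theo:subharmonic_melnikov_conservative}. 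I expect the main obstacle to be exactly the control of this shifted root: one must verify that the dissipative balance stays within the regime where the simple-zero structure of the Melnikov function survives, which is what simultaneously dictates the bound $\rho$ and the $O(\tr/\te)$ correction appearing in $t_0^*$.
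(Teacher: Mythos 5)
Your proposal is correct and follows essentially the same route as the paper's proof: measuring the return displacement with $H_0$, expanding via Lemma~\ref{lem:barrow} under the scaling $\varepsilon=\te\delta$, $r=1-\tr\delta$, dividing by $\delta$ to obtain the reduced system~(\ref{eq:dissipative_nm_periodic_orbit_condition4}), solving the $\delta=0$ equations through the shifted Melnikov balance $\te M^{n,m}(t_0)=2m\tr\by^2$, and closing with the implicit function theorem using $\alpha'(\by)\neq0$ and $(M^{n,m})'(\wt)\neq0$. The only cosmetic difference is that the paper defines $\rho$ explicitly as $M^{n,m}(t_M)/(2m\by^2)$ with $t_M$ the nearest local maximum (or nearest critical point) of $M^{n,m}$ to $\bt$, whereas you choose $\rho$ abstractly so the level set stays in a monotone neighbourhood of $\bt$ --- the same idea, stated less quantitatively.
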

\begin{proof}
As in the conservative case, we use the unperturbed Hamiltonian to measure the
distance between points in $\Sigma$. Then,
Eq.~(\ref{eq:dissipative_nm_periodic_orbit_condition}) can be rewritten as
\begin{equation}
\left( 
\begin{array}{c}
H_0\left( 0,\Pi_{y_0}(\tilde{P}_\varepsilon^m(y_0,t_0)) \right)\\
\Pi_{t_0}\left( \tilde{P}^m_\varepsilon(y_0,t_0) \right)
\end{array}
 \right)-
\left( 
\begin{array}{c}
H_0(0,y_0)\\ t_0+nT
\end{array}
\right)=\left( \begin{array}{c}
0\\0
\end{array} \right).
\label{eq:dissipative_nm_periodic_orbit_condition2}
\end{equation}
As in Theorem~\ref{theo:subharmonic_melnikov_conservative}, we proceed by expanding
this equation in powers of $\varepsilon$ and $r-1$
using~(\ref{eq:barrows_formula_expanded}) and
(\ref{eq:G}) obtaining
\begin{equation}
\left( 
\begin{array}{c}
\frac{y_0^2}{2}(r^{4m}-1)+\varepsilon G^m(y_0,t_0)
+O(\varepsilon^2)+O(\varepsilon(r-1))\\
\displaystyle
\sum_{i=0}^{m-1}\alpha^+(r^{2i}y_0)+\sum_{i=0}^{m-1}\alpha^-\left(
-r^{2i+1}y_0 \right)
+O(\varepsilon)-nT
\end{array} 
\right)=
\left(\begin{array}{c}
0\\0
\end{array}\right).
\label{eq:dissipative_nm_periodic_orbit_condition3}
\end{equation}
Note that, for $r=1$, Eq.~(\ref{eq:dissipative_nm_periodic_orbit_condition3})
becomes Eq.~(\ref{eq:periodic_orbit_condition3}).

We are interested in studying
Eq.~(\ref{eq:dissipative_nm_periodic_orbit_condition3}) when $1-r$ and
$\varepsilon$ are both small. Therefore, for $\te>0$ and $\tr>0$ we set
\begin{equation}
\varepsilon=\te\delta,\;r=1-\tr\delta,
\label{eq:change_of_parameters}
\end{equation}
where $\delta>0$ is a small parameter. Then
Eq.~(\ref{eq:dissipative_nm_periodic_orbit_condition3}) becomes
\begin{align}
\tilde{F}&_{n,m}(y_0,t_0,\delta):=\nonumber\\
&\left( 
\begin{array}{c}
-2m\tr y_0^2+\te G^m(y_0,t_0)+O(\delta)\\
\displaystyle m\alpha(y_0)+O(\delta)-nT
\end{array} 
\right)=
\left(\begin{array}{c}
0\\0
\end{array}\right).\label{eq:dissipative_nm_periodic_orbit_condition4}
\end{align}
We now need to apply the implicit function theorem to
(\ref{eq:dissipative_nm_periodic_orbit_condition4}).\\
The first step is to solve
Eq.~(\ref{eq:dissipative_nm_periodic_orbit_condition4}) for $\delta=0$. The
second equation gives $\alpha(\by)=\frac{nT}{m}$, as in
Theorem~\ref{theo:subharmonic_melnikov_conservative}.
To solve the first equation, we define
\begin{equation}
f^{n,m}(t_0)=-2m\tr\by^2+\te M^{n,m}(t_0),
\label{eq:wt_equation}
\end{equation}
and $\wt$ will be given by a zero of $f^{n,m}(t_0)$. Assume now that $\bt$ is a
simple zero of $M^{n,m}(t_0)$. As $M^{n,m}(t_0)$ is a smooth periodic function,
it possesses at least one local maximum. Let $t_M$ be the closest value to $\bt$
where $M^{n,m}(t_0)$ possesses a local maximum, and assume
$\left(M^{n,m}\right)'(t_0)\ne0$ for all $t_0$ between $\bt$ and $t_M$. If
$\left(M^{n,m}\right)'(t_0)$ vanishes between $\bt$ and $t_M$, we then take
$t_M$ to be the closest value to $\bt$ such that $\left(M^{n,m}\right)'(t_0)=0$
to ensure that $\left(M^{n,m}\right)'(t_0)\ne0$ between $\bt$ and $t_M$. We then
define $\rho:=\frac{M^{n,m}(t_M)}{2m\by^2}$.  Then, if
\begin{equation}
0<\frac{\tr}{\te}<\rho,
\label{eq:dissipative_nm_periodic_orbit_existence_condition1}
\end{equation}
there exists $\wt$ $\frac{\tr}{\te}$-close to $\bt$ where $f^{n,m}(t_0)$
has a simple zero. Since $\alpha'(\by)>0$, a similar
calculation to the one in Theorem~\ref{theo:subharmonic_melnikov_conservative}
shows that
\begin{equation*}
\det\left(D\tilde{F}_{y_0,t_0}\left(\by,\widehat{t}_0,0\right)\right)\ne0,
\end{equation*}
and hence we can apply the implicit function theorem near
$(y_0,t_0,\delta)=(\by,\widehat{t}_0,0)$ to show that there exists
$\delta_0$ such that, if $0<\delta<\delta_0$, then there exists
\begin{equation*}
(y_0^*,t_0^*)=(\by,\widehat{t}_0)+O(\delta)=\left( \by,\bt
\right)+O(\delta)+O(\tr/\te) 
\end{equation*}
which is a solution of Eq.~(\ref{eq:dissipative_nm_periodic_orbit_condition}).\\
This solution tends to the one given by
Theorem~\ref{theo:subharmonic_melnikov_conservative} when $\tr\to0^+$ . This is
a natural consequence of that fact that
Eq.~(\ref{eq:dissipative_nm_periodic_orbit_condition}) tends to the
Equation~(\ref{eq:periodic_orbit_condition}) as $r\to 1^-$.
\end{proof}

\begin{remark}
In order to determine $\rho$, we have imposed $t_M$ to be the local maximum of
the Melnikov function closest to its simple zero, $\bt$. Instead, one could also
use the absolute maximum so increasing the range given in
Eq.~(\ref{eq:dissipative_nm_periodic_orbit_existence_condition1}). However, in
this case, the values where $\left(M^{n,m}_1\right)'(t_0)=0$ have to be avoided
to ensure that the desired zero of $f^{n,m}(t_0)$ is simple.
\end{remark}

\begin{remark}
Arguing as in Remark~\ref{remark:epsilon_0}, for every $m$ fixed, the constant
$\delta_0(m)$ can be taken such that if $\delta < \delta_0(m)$, there exist 
periodic orbits for all $n$ such that $\alpha(\frac{nT}{m})^{-1}\in\tS$.
\end{remark}

\section{Intersection of the separatrices}\label{sec:intersection_of_separatrices}
We now focus our attention on the invariant manifolds of the saddle
points of system~(\ref{eq:general_field}),(\ref{eq:restitution_coefficient})
when $\varepsilon>0$.  As explained in \S\ref{sec:system_decription}, for
$\varepsilon=0$, there exist two heteroclinic orbits connecting the critical
points $z^\pm$ if $r=1$ (see Fig.~\ref{fig:phase_portrait_unperturbed})
whereas, if $r<1$, the unstable manifolds $W^u(z^\pm)$ spiral discontinuously from $z^\pm$ to the
origin and $W^s(z^\pm)$ becomes unbounded (see
Fig.~\ref{fig:phase_portrait_unperturbed_dissipative}). As we will show, in
both cases, heteroclinic orbits may exist for the perturbed system.\\

For a smooth system with Hamiltonian $K_0(x,y)+\varepsilon K_1(x,y,t)$, the
persistence of homoclinic or heteroclinic connections is achieved by the well
known Melnikov method which states that if the Melnikov function
\begin{equation*}
M(t_0)=\int_{-\infty}^{+\infty}\left\{K_0,K_1\right\}\left(
\phi\left(t;t_0,z_0,0\right),t+t_0
\right)dt,
\end{equation*}
with $z_0=(x_0,y_0)\in W^u(z^-)=W^s(z^+)$, has a simple zero, then the stable
and unstable manifolds intersect for $\varepsilon>0$ small enough
(see \cite{GucHol83}).\\
In this section we will modify the classical Melnikov method  and we will
rigorously prove that it is still valid  for a piecewise smooth system of the
form (\ref{eq:general_field}), even if $r\le1$.\\
There exist in the literature several works where this tool has been used in
particular non-smooth examples, \cite{Hog92,BruKoc91}.
Theorem~\ref{theo:intersection_of_separatrices}
generalises the result stated in \cite{BruKoc91} where the Melnikov method is
shown to work, although the proof there is not complete.\\
The homoclinic version of a piecewise-defined system with a different topology
was studied in \cite{Kun00}, \cite{Kuc07} and \cite{BatFec08}.  However, the
tools developed there do not apply for a system of the type
(\ref{eq:general_field}).\\

We begin by discussing the persistence of objects for $\varepsilon>0$ and
$r\le1$. It is clear that by separately extending the systems
$\X_0^\pm+\varepsilon\X_1^\pm$ to $\RR^2\times\T$, where $\T=\RR/T$, we get two
smooth systems for which the classical perturbation theory holds. It follows
then that, as $z^\pm$ are hyperbolic fixed points, for $\varepsilon>0$ small
enough there exist two hyperbolic $T$-periodic orbits, $\Lambda^\pm_\varepsilon
\equiv \{z_\varepsilon^\pm(\tau); \tau \in [0,T]\}$, with two-dimensional stable
and unstable manifolds $W^{s,u}(\Lambda^\pm_\varepsilon)$.\\
\begin{figure}
\begin{center}
\includegraphics{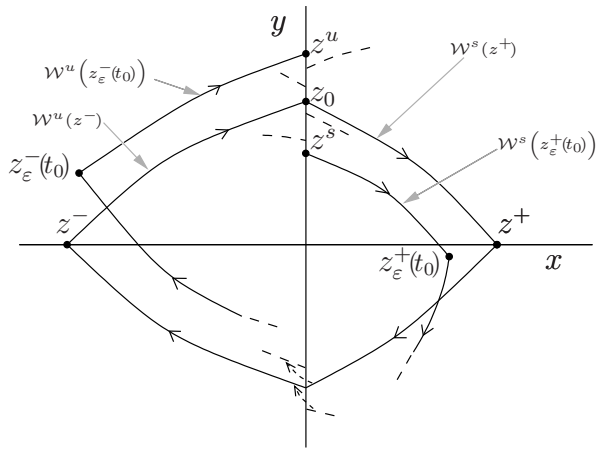}
\end{center}
\caption{Section of the unperturbed and perturbed invariant manifolds for
$t=t_0$.}
\label{fig:perturbed_invariant_manifolds_one_block}
\end{figure}
As the system is non-autonomous, we fix the Poincar\'e section
\begin{equation*}
\Theta_{t_0}=\left\{ (x,y,t_0),\, (x,y)\in\RR^2 \right\},
\end{equation*}
and consider the time $T$ stroboscopic map
\begin{equation*}
\Pi_{t_0}:\Theta_{t_0}\longrightarrow\Theta_{t_0+T},
\end{equation*}
where
\begin{equation*}
\Pi_{t_0}(z)=\phi(t_0+T;t_0,z,\varepsilon,r)
\end{equation*}
and $\phi$ is defined \S\ref{sec:definitions}.\\
This map has $z^\pm_{\varepsilon}(t_0)$ as hyperbolic fixed points with one
dimensional stable and unstable manifolds $W^{s,u}(z^\pm_{\varepsilon}(t_0))$
(see Fig.~\ref{fig:phase_portrait_unperturbed_dissipative}).  Proceeding as in
\cite{BruKoc91}, we fix the section $\Sigma$ defined in (\ref{eq:boundary}) and
study its intersection with the stable and unstable curves
$W^u(z_\varepsilon^-(t_0))$ and $W^s(z_\varepsilon^+(t_0))$. In the unperturbed
conservative case ($\varepsilon=0$ and $r=1$), $W^u(z^-)$ and $W^s(z^+)$
intersect transversally $\Sigma$ in a point $z_0$. The perturbed manifolds,
$W^u(z_\varepsilon^-(t_0)$) and $W^s(z_\varepsilon^+(t_0)$), intersect $\Sigma$
at points $z^u(t_0)$ and $z^s(t_0)$ respectively, $\varepsilon$-close to
$z_0$ (see Fig.~\ref{fig:perturbed_invariant_manifolds_one_block}).  Recalling
the effect of the coefficient of restitution~(\ref{eq:restitution_coefficient})
explained in \S\ref{sec:rest_coeff}, both invariant manifolds will intersect if,
for some $t_0$, one has $rz^u(t_0)=z^s(t_0)$, $r\le1$.  As in \cite{BruKoc91}
and \cite{Hog92}, we use the unperturbed Hamiltonian $H_0(x,y)$ to measure the
distance $\Delta(t_0,\varepsilon,r)$ between $z^u$ and $z^s$
\begin{equation}
\Delta(t_0,\varepsilon,r)=H_0(rz^u(t_0))-H_0(z^s(t_0))=r^2H_0(z^u(t_0))-H_0(z^s(t_0)).
\label{eq:distance_zus}
\end{equation}
We then have the following result.
\begin{thm}\label{theo:intersection_of_separatrices}
Consider system (\ref{eq:general_field}),(\ref{eq:restitution_coefficient}), and
let $z_0=W^s(z^+)\cap\Sigma$. Define the Melnikov function
\begin{equation}
M(t_0)=\int_{-\infty}^{+\infty}\mspace{-10mu}\left\{
H_0,H_1
\right\}\left( \phi\left(t; t_0,z_0,0 \right),t \right)dt,
\label{eq:Melnikovs_function}
\end{equation}
where 
\begin{equation}
\phi(t;t_0,z_0,0)=\left\{
\begin{array}{ll}
\phi^-(t;t_0,z_0,0)&\text{if }t\le t_0\\
\phi^+(t;t_0,z_0,0)&\text{if }t>t_0
\end{array}
\right..
\label{eq:unperturbed_heteroclinic_orbit}
\end{equation}
is the piecewise smooth heteroclinic orbit that exists for $r=1$ and $\varepsilon=0$. Assume that
$M(t_0)$ possesses a simple zero at $\bt$. Then the
following holds.
\begin{enumerate}[a)]
\item If $r=1$, there exists $\varepsilon_0>0$ such that, for every
$0<\varepsilon<\varepsilon_0$, one can find a simple zero $t_0^*=\bt+O(\varepsilon)$ of the function
$\Delta(t_0,\varepsilon,1)$. Hence, the curves
$W^u(z_\varepsilon^-(t_0^*))$ and $W^s(z_\varepsilon^+(t_0^*))$ intersect
transversally at some point, $z_h\in\Sigma$, $\varepsilon$-close to
$z_0\in\Sigma$ and
\begin{equation*}
\left\{\phi(t;t_0^*,z_h,\varepsilon),\,t\in\RR\right\},
\end{equation*}
is a heteroclinic orbit between the periodic orbits
$\Lambda^-_\varepsilon$ and $\Lambda^+_\varepsilon$.
\item If $r<1$, there exists $\rho$ such that, given $\te,\tr>0$ satisfying
$0<\frac{\tr}{\te}<\rho$, one can find $\delta_0$ such that, if
$\varepsilon=\te\delta$ and $r=1-\tr\delta$, then, for $0<\delta<\delta_0$,
there exists a simple zero of the function $\Delta(t_0,\te\delta,1-\tr\delta)$
of the form $t_0^*=\bt+O(\frac{\tr}{\te})+O(\delta)$. Hence, the curves
$W^u(z_\varepsilon^-(t_0^*))$ and $W^s(z_\varepsilon^+(t_0^*))$ intersect
$\Sigma$ transversally at two points, $z_h^\pm\in\Sigma$, satisfying
$z_h^+=z_0+O(\delta)$ and $z_h^-=z_h^+/r$, such that
\begin{equation*}
\left\{ \phi(t;t_0^*,z_h^+,\te\delta,1-\tr\delta),\,t\in\RR \right\}
\end{equation*}
is a heteroclinic orbit between the periodic orbits
$\Lambda^-_\varepsilon$ and $\Lambda^+_\varepsilon$.
\end{enumerate}
\end{thm}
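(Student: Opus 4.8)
The plan is to adapt the structure of the Melnikov computation from Theorems~\ref{theo:subharmonic_melnikov_conservative} and~\ref{theo:dissipative_subharmonic_orbits} to the distance function $\Delta(t_0,\varepsilon,r)$ defined in~(\ref{eq:distance_zus}). The first step is to obtain, for $r=1$, an $\varepsilon$-expansion of $H_0(z^u(t_0))-H_0(z^s(t_0))$. Since $z^u(t_0)\in W^u(z^-_\varepsilon(t_0))$ and $z^s(t_0)\in W^s(z^+_\varepsilon(t_0))$ lie on the perturbed manifolds $\varepsilon$-close to $z_0$, I would split the measurement at the base point $z_0$ and integrate $\frac{d}{dt}H_0^\pm(\phi^\pm(t;t_0,z_0,\varepsilon))=\varepsilon\{H_0^\pm,H_1^\pm\}(\phi^\pm,t)$ (the identity proved in Lemma~1) backward along the unstable branch and forward along the stable branch. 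The backward integral from $t_0$ to $-\infty$ captures $-H_0(z^u(t_0))$ up to the energy at $z^-$, and the forward integral from $t_0$ to $+\infty$ captures $H_0(z^s(t_0))$; the boundary terms at $\pm\infty$ coincide because $z^+$ and $z^-$ lie on the same energy level $c_1$ by hypothesis C.1 and C.3. Crucially, the perturbed periodic orbits $\Lambda^\pm_\varepsilon$ remain on the level $c_1+O(\varepsilon)$ with the \emph{same} correction on both sides, so these contributions cancel in the difference. This yields
\begin{equation*}
\Delta(t_0,\varepsilon,1)=\varepsilon\,M(t_0)+O(\varepsilon^2),
\end{equation*}
with $M(t_0)$ the improper integral in~(\ref{eq:Melnikovs_function}) evaluated along the unperturbed piecewise-smooth heteroclinic~(\ref{eq:unperturbed_heteroclinic_orbit}). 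Convergence of the improper integral is guaranteed by the exponential approach to the hyperbolic saddles $z^\pm$ together with the boundedness of $H_1$. Part a) then follows from the implicit function theorem applied to $\Delta(t_0,\varepsilon,1)/\varepsilon=0$: a simple zero $\bt$ of $M$ persists as $t_0^*=\bt+O(\varepsilon)$, and transversality of the intersection is equivalent to $M'(\bt)\neq0$.

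For part b), I would reuse the change of parameters $\varepsilon=\te\delta$, $r=1-\tr\delta$ from~(\ref{eq:change_of_parameters}). Expanding~(\ref{eq:distance_zus}) using $r^2=1-2\tr\delta+O(\delta^2)$ together with the expansion just obtained gives
\begin{equation*}
\Delta(t_0,\te\delta,1-\tr\delta)=\delta\big(\te\,M(t_0)-2\tr\,H_0(z_0)\big)+O(\delta^2),
\end{equation*}
so dividing by $\delta$ reduces the problem to finding a simple zero of $g(t_0):=\te M(t_0)-2\tr H_0(z_0)$. This is exactly the horizontal-shift argument used for $f^{n,m}$ in Theorem~\ref{theo:dissipative_subharmonic_orbits}: a simple zero $\bt$ of $M$ sits between a local maximum $t_M$ (with $M'$ nonvanishing in between) and $\bt$, and for $\tr/\te$ smaller than $\rho:=M(t_M)/\big(2H_0(z_0)\big)$ the shifted function $g$ still crosses zero transversally at some $\wt=\bt+O(\tr/\te)$. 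The implicit function theorem then produces $t_0^*=\wt+O(\delta)=\bt+O(\tr/\te)+O(\delta)$. The two intersection points $z_h^\pm$ arise because the restitution condition~(\ref{eq:restitution_coefficient}) forces the heteroclinic orbit to hit $\Sigma$ on both sides, with $z_h^-=z_h^+/r$ by construction, and $z_h^+=z_0+O(\delta)$ since the zero is $O(\delta)$-close to the unperturbed data.

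The main obstacle I anticipate is the rigorous justification that the perturbed manifolds $W^{u}(z^-_\varepsilon(t_0))$ and $W^{s}(z^+_\varepsilon(t_0))$ are well-defined $C^1$ curves $\varepsilon$-close to the unperturbed separatrix \emph{uniformly on the noncompact time interval}, and that the energy corrections from the perturbed periodic orbits $\Lambda^\pm_\varepsilon$ genuinely cancel in the difference $\Delta$. This requires the standard but delicate argument that, after extending each field $\X_0^\pm+\varepsilon\X_1^\pm$ smoothly to $\RR^2\times\T$, the invariant-manifold theory for the hyperbolic periodic orbits applies and the parametrizations converge to the unperturbed heteroclinic as $\varepsilon\to0$, so that the truncation of the improper Melnikov integral produces only an $O(\varepsilon^2)$ error. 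The piecewise-smooth matching at $\Sigma$ adds bookkeeping---one must check that the contributions from the $S^+$ and $S^-$ branches glue correctly at $z_0$ using the continuity $H_1^+(0,y,t)=H_1^-(0,y,t)$---but this is routine once the uniform-closeness estimate is in hand.
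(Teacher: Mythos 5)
Your overall strategy does coincide with the paper's: measure the splitting with the unperturbed Hamiltonian via $\Delta(t_0,\varepsilon,r)$ from (\ref{eq:distance_zus}), expand it as $(r^2-1)c_1+\varepsilon M(t_0)+\text{h.o.t.}$, apply the implicit function theorem for $r=1$, and for $r<1$ use the rescaling $\varepsilon=\te\delta$, $r=1-\tr\delta$ together with the shifted-zero argument and $\rho=M(t_M)/(2c_1)$ (note $H_0(z_0)=c_1$); your part b) is essentially the paper's proof. The gap lies at the step you yourself label as ``crucial'': the boundary terms of the fundamental-theorem-of-calculus computation. You claim the perturbed periodic orbits $\Lambda^\pm_\varepsilon$ stay on the level $c_1+O(\varepsilon)$ \emph{with the same correction on both sides, so these contributions cancel in the difference}. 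No symmetry between $S^+$ and $S^-$ is assumed anywhere (the paper stresses this), so there is no reason for the first-order energy corrections of two different periodic orbits, produced by two different perturbations $H_1^\pm$, to coincide; if they were genuinely of order $\varepsilon$ and unequal, an $O(\varepsilon)$ constant would pollute $\Delta$ and the theorem as stated would fail. The fact that actually saves the argument---and which your plan never invokes---is Eq.~(\ref{eq:H_at_xeps}): each correction vanishes \emph{individually} to first order, $H_0^\pm(z^\pm_\varepsilon(t_0))=c_1+O(\varepsilon^2)$, because $DH_0^\pm(z^\pm)=0$ at the hyperbolic critical points, regardless of what $\partial z^\pm_\varepsilon/\partial\varepsilon$ is. Without this observation the expansion (\ref{eq:distance_zus4}), and hence both parts of the theorem, cannot be established.

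A second, related defect is your treatment of the limits $s\to\pm\infty$. Your FTC step requires evaluating $H_0$ along the \emph{perturbed} stable/unstable trajectories at $s=\pm\infty$, but these limits do not exist: the orbits accumulate on the periodic orbits $\Lambda^\pm_\varepsilon$, along which $H_0$ is not constant but oscillates in time. You defer this to ``standard but delicate'' invariant-manifold estimates, yet it is precisely the non-routine device of the paper's proof: one introduces the regularized functions $f_\pm$ of (\ref{eq:aux_functions}), i.e.\ the difference between the energy along the manifold orbit and the energy along the periodic orbit itself, which tend to zero at $\pm\infty$ and whose derivative integrals converge thanks to the exponential attraction/repulsion of the hyperbolic periodic orbits. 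Supplying this regularization, together with the vanishing-gradient observation above, is exactly what separates your outline from a proof.
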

\begin{remark}
Note that, for $r=1$, we recover the classical result given by the Melnikov
method for heteroclinic orbits extended to the non-smooth
system~(\ref{eq:general_field}).
\end{remark}

\begin{proof}
Applying the fundamental theorem of calculus to the functions
\begin{equation*}
s\longmapsto H_0^{\pmm}\left( \phi^\pmm\left( s;t_0,z^{s/u},\varepsilon \right) \right),
\end{equation*}
we obtain
\begin{align*}
H_0^\pmm\left( z^{s/u} \right)=H_0^\pm(\phi\left( T^{s/u};t_0,z^{s/u},\varepsilon
\right)+\int_{T^{s/u}}^{t_0}\frac{d}{ds}H_0^\pmm\left( \phi^\pmm\left(
s;t_0,z^{s/u},\varepsilon
\right)ds \right),
\end{align*}
and then make $T^{s/u}=\pmm\infty$. However, the limits
\begin{equation*}
\lim_{t\to\pmm\infty}\phi^\pmm(t;t_0,z^{s/u},\varepsilon)
\end{equation*}
do not exist because the flow at the respective stable/unstable manifolds tends
to the periodic orbit $\Lambda^\pm_{\varepsilon}$. To avoid this limit, we
proceed as follows.\\
Given $t_0$, we define
\begin{equation}
\begin{aligned}
f_-(s)&=H_0^-\left( \phi^-\left( s;t_0,z^u,\varepsilon \right)
\right)-H_0^-\left( \phi^-\left( s;t_0,z^-_\varepsilon(t_0),\varepsilon \right)
\right),\,s\le t_0\\
f_+(s)&=H_0^+\left( \phi^+\left( s;t_0,z^s,\varepsilon \right)
\right)-H_0^-\left( \phi^+\left( s;t_0,z^+_{\varepsilon}(t_0),\varepsilon \right)
\right),\,s\ge t_0,
\end{aligned}
\label{eq:aux_functions}
\end{equation}
which are well defined smooth functions because the flow is restricted to the stable and
unstable invariant mani\-folds or to the hyperbolic periodic orbit and never crosses the switching
manifold~$\Sigma$.\\
Then, we write Eq.~(\ref{eq:distance_zus}) as
\begin{equation}
\Delta(t_0,\varepsilon,r)=r^2f_-(t_0)-f_+(t_0)+r^2H_0^-\left( z^-_\varepsilon\left( t_0
\right) \right)-H_0^+\left( z^+_\varepsilon\left( t_0 \right) \right).
\label{eq:distance_zus5}
\end{equation}
Noting that
\begin{equation}
H_0^\pm(z_\varepsilon^\pm(t_0))=\underbrace{H_0^\pm(z^\pm)}_{c_1}+\varepsilon\underbrace{DH_0^\pm(z^\pm)}_{\stackrel{\shortparallel}{0}}\frac{\partial
z_\varepsilon^\pm(t_0)}{\partial \varepsilon}{|_{\varepsilon=0}}+O(\varepsilon^2),
\label{eq:H_at_xeps}
\end{equation}
Eq.~(\ref{eq:distance_zus5}) becomes
\begin{equation}
\Delta(t_0,\varepsilon,r)=r^2f_-(t_0)-f_+(t_0)+(r^2-1)c_1+O(\varepsilon^2)
\label{eq:distance_zus6}
\end{equation}
We apply the fundamental theorem of calculus to the
functions~(\ref{eq:aux_functions}) to compute
\begin{equation}
\begin{aligned}
f_-(t_0)&=f_-(T^u)+\int_{T^u}^{t_0}f'_-(s)ds=\\
&f_-(T^u)+\varepsilon\int_{T^u}^{t_0}\Big(\left\{ H_0^-,H_1^-
\right\}\left( \phi^-\left( s;t_0,z^u,\varepsilon \right),s \right)\\
&-\left\{H_0^-,H_1^- \right\}\left( \phi^-\left( s;t_0,z^-_{\varepsilon}\left( t_0
\right),\varepsilon \right),s \right)\Big)ds\\
f_+(t_0)&=f_+(T^s)-\int^{T^s}_{t_0}f'_+(s)ds=\\
&f_+(T^s)-\varepsilon\int^{T^s}_{t_0}\Big(\left\{ H_0^+,H_1^+
\right\}\left( \phi^+\left( s;t_0,z^s,\varepsilon \right),s \right)\\
&-\left\{H_0^+,H_1^+ \right\}\left( \phi^+\left( s;t_0,z^+_{\varepsilon}\left( t_0
\right),\varepsilon \right),s \right)\Big)ds.
\end{aligned}
\label{eq:faux2}
\end{equation}
Due to the hyperbolicity of the periodic orbits $\Lambda^\pm_{\varepsilon}$, the
flow on $W^{s/u}(\Lambda_\varepsilon^\pmm)$ converges exponentially to them (forwards
or backwards in time). That is, there exist positive constants $C$, $\lambda$
and $s_0$ such that
\begin{equation*}
\Big\vert\phi^+\left( s;t_0,z^{s},\varepsilon
\right)-\phi^+\left( s;t_0,z^+_\varepsilon(t_0),\varepsilon
\right)\Big\vert<C e^{-\lambda s},\;\forall s>s_0,
\end{equation*}
and similarly for $\phi^-$. This allows one to make $T^{s/u}\to\pmm\infty$ in
Eqs.~(\ref{eq:faux2}), since
\begin{equation*}
\lim_{s\to\pm\infty} f_{\pm}(s)=0
\end{equation*}
and, moreover, the improper integrals converge in the limit.\\
Now, expanding the expressions in~(\ref{eq:faux2}) in powers of $\varepsilon$, we find
\begin{equation}
\begin{aligned}
&f_-(t_0)=\varepsilon\int_{-\infty}^{t_0}\left\{ H_0^-,H_1^-
\right\}\left( \phi^-\left( s;t_0,z_0,0 \right),s \right)ds+O(\varepsilon^2)\\
&f_+(t_0)=-\varepsilon\int^{\infty}_{t_0}\left\{ H_0^+,H_1^+
\right\}\left( \phi^+\left( s;t_0,z_0,0 \right),s \right)ds+O(\varepsilon^2),
\label{eq:fauxexp}
\end{aligned}
\end{equation}
where we have used property~(\ref{eq:H_at_xeps}) to include the second terms in
the integrals into the higher order terms. Finally, substituting Eq.~(\ref{eq:fauxexp}) into
Eq.~(\ref{eq:distance_zus6}), we obtain
\begin{equation}
\Delta(t_0,\varepsilon,r)=(r^2-1)c_1+\varepsilon
M(t_0)+O(\varepsilon^2)+O(\varepsilon\left( r-1 \right)),
\label{eq:distance_zus4}
\end{equation}
where $M(t_0)$ is defined in Eq.~(\ref{eq:Melnikovs_function}). 

We now distinguish between the cases $r=1$ and $r<1$. If $r=1$, we recover 
the classical expression for the distance between the perturbed invariant 
manifolds. By applying the implicit function theorem, it is easy to show 
that, if $M(t_0)$ has a simple zero at $\bt$, then
$\Delta(t_0,\varepsilon,1)$ has a simple zero at $t_0^*=\bt+O(\varepsilon)$. Thus,
the curves $W^u(z_\varepsilon^-(t_0^*))$ and $W^s(z_\varepsilon^+(t_0^*))$
intersect $\Sigma$ transversally at some point, $z_h=z^u(t_0^*)=z^s(t_0^*)\in\Sigma$, $\varepsilon$-close to
$z_0\in\Sigma$. Therefore,
\begin{equation*}
\left\{\phi(t;t_0^*,z_h,\varepsilon),\,t\in\RR\right\},
\end{equation*}
is a heteroclinic orbit between the periodic orbits
$\Lambda^-_\varepsilon$ and $\Lambda^+_\varepsilon$.\\

If $r<1$, we define $\varepsilon=\te\delta$ and $r=1-\tr\delta$, and
Eq.~(\ref{eq:distance_zus4}) becomes
\begin{equation}
\frac{\Delta(t_0,\te\delta,1-\tr\delta)}{\delta}=-2\tr c_1+\te M(t_0)+O(\delta).
\label{eq:distance_zus7}
\end{equation}
Then we argue as in Theorem~\ref{theo:dissipative_subharmonic_orbits}.  As
$M(t_0)$ is a smooth periodic function, it possesses at least one local maximum.
Let $t_M$ be the closest value to $\bt$ where $M(t_0)$ possesses a local
maximum, and assume $M'(t_0)\ne0$ for all $t_0$ between $\bt$ and $t_M$. If
$M'(t_0)$ vanishes between $\bt$ and $t_M$, we then take $t_M$ to be the closest
value to $\bt$ such that $M'(t_0)=0$ to ensure that $M'(t_0)\ne0$ between $\bt$
and $t_M$. We then define $\rho:=\frac{M(t_M)}{2c_1}$. Then, if
\begin{equation*}
0<\frac{\tr}{\te}<\rho,
\end{equation*}
there exists $\wt$ $\frac{\tr}{\te}$-close to $\bt$ such that
\begin{equation*}
-2\tr c_1+M(\wt)=0
\end{equation*}
and $M'(\wt)\ne0$. Hence, we can apply the implicit function theorem to
Eq.~(\ref{eq:distance_zus7}) near the point $(t_0,\delta)=(\wt,0)$ to conclude
that there exists $\delta_0$ such that, if $0<\delta<\delta_0$, then one can
find
\begin{equation*}
t_0^*=\wt+O(\delta)=\bt+O(\delta)+O(\tr/\te)
\end{equation*}
which is a simple solution of Eq.~(\ref{eq:distance_zus7}).\\
Hence, arguing similarly as for $r=1$, there exist two points
$z_h^+=z^s(t_0^*)=z_0+O(\delta)$ and $z_h^-=z^u(t_0^*)=z_0/r+O(\delta)r$ such
that $z_h^+=rz_h^-$ and
\begin{equation*}
\left\{ \phi(t;t_0^*,z_h^+,\te\delta,1-\tr\delta),\,t\in\RR \right\}
\end{equation*}
where
\begin{equation*}
\phi\left( t;t_0^*,z_h^+,\varepsilon,r \right)=\left\{
\begin{aligned}
&\phi^-\left( t;t_0,t_0^*,z_h^+/r,\varepsilon \right)&&\text{if }t\le
t_0^*\\
&\phi^+\left( t;t_0,t_0^*,z_h^+,\varepsilon \right)&&\text{if }t\ge
t_0^*
\end{aligned}\right.
\end{equation*}
is a heteroclinic orbit between the periodic orbits
$\Lambda^-_\varepsilon$ and $\Lambda^+_\varepsilon$.
\end{proof}

\section{Example: the rocking block}\label{sec:rocking_block}
\subsection{System equations}
In order to illustrate the results shown in the previous sections, we consider
the mechanical system shown in Fig.~\ref{fig:rocking_block}, which consists of a
rocking block under a horizontal periodic forcing given by
\begin{equation}
a_H(t)=\varepsilon\alpha g \cos\left( \Omega t+\theta \right).
\label{eq:periodic_forcing}
\end{equation}
\begin{figure}
\begin{center}
\includegraphics[width=0.6\textwidth]{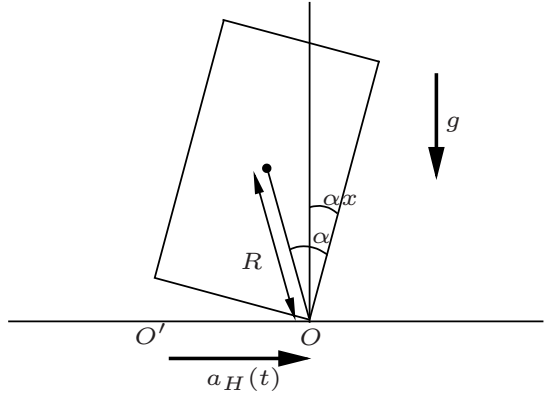}
\end{center}
\caption{Rocking block}
\label{fig:rocking_block}
\end{figure}
\noindent
This system was first studied in \cite{Hou63}. The equations
that govern its behaviour are well known (see for example
\cite{YimChoPen80,SpaKoh84}), and are given by
\begin{align}
\alpha\ddot{x}+\sgn(x)\sin( \alpha( 1&-\sgn\left( x \right)x))=\nonumber\\
&-\alpha\varepsilon\cos\left( \alpha\left( 1-\sgn\left( x \right)x \right) \right)\cos\left(
\omega t \right)\label{eq:nonlinear_diferential_equation}\\
&\dot{x}(t^+_A)=r\dot{x}(t^-_A) \hspace{5mm} (x=0)
\label{eq:loss_of_energy}
\end{align}
where the last equation,~(\ref{eq:loss_of_energy}), simulates the loss of
energy of the block at every impact with the ground, as described in
\S\ref{sec:rest_coeff}. In addition, the function
\begin{equation}
\sgn(x)=\left\{
\begin{aligned}
1&&\text{if }x>0\\
-1&&\text{if }x<0
\end{aligned}\right.
\label{eq:sign}
\end{equation}
distinguishes between the two modes of movement: rocking about the point $O$
when the angle is positive ($x>0$) or rocking about $O'$ when the angle $x$ is
negative. Obviously, this makes the system piecewise smooth and so it can be
written in the form of Eq.~(\ref{eq:general_field}). Moreover, conditions
\CondsC~ are satisfied and, hence, the results shown in previous sections can be applied.
However, as our purpose here is to illustrate them, we will consider the linearized version of
Eq.~(\ref{eq:nonlinear_diferential_equation}) instead, which will permit us to
perform explicit analytical computations. This linearization is achieved by
assuming $\alpha\ll1$, namely that the block is slender \cite{Hog89}. Thus, the
system that we will consider, written in the form of
Eq.~(\ref{eq:general_field}), is
\begin{align}
\left.
\begin{aligned}
\dot{x}=&y\\
\dot{y}=&x-1-\varepsilon\cos\left( \omega t \right)
\end{aligned}
\right\}&\text{ if }x>0\label{eq:linear_positive}\\
\nonumber\\
\left.
\begin{aligned}
\dot{x}=&y\\
\dot{y}=&x+1-\varepsilon\cos\left( \omega t  \right)
\end{aligned}
\right\}&\text{ if }x<0\label{eq:linear_negative}\\
\nonumber\\
y(t^+_A)=ry(t^-_A)&\hspace{5mm} (x=0)
\label{eq:loss_of_energy2},
\end{align}
where the perturbation becomes a smooth function due to the linearization.\\
If $r=1$, system~(\ref{eq:linear_positive})-(\ref{eq:linear_negative}) can be written in
the form~(\ref{eq:hamiltonian_properties}) using the Hamiltonian function
\begin{equation}
H_\varepsilon(x,y,t)=H_0(x,y)+\varepsilon H_1(x,t),
\label{eq:hamiltonian_linear}
\end{equation}
where
\begin{equation}
H_0(x,y)=\left\{
\begin{aligned}
\frac{y^2}{2}-\frac{x^2}{2}+x,&\text{ if }x>0\\
\frac{y^2}{2}-\frac{x^2}{2}-x,&\text{ if }x<0
\end{aligned}\right.
\label{eq:linear_unperturbed-Hamiltonian}
\end{equation}
and
\begin{equation}
H_1(x,t)=x\cos\left( \omega t \right)
\label{eq:linear_Hamitlonian_perturbation}
\end{equation}
is $T$-periodic, with $T=2\pi/\omega$ and is a $C^\infty$ function.\\
In addition, when $\varepsilon=0$, conditions \CondsC~are fulfilled, and the
phase portrait for the
system~(\ref{eq:linear_positive})-(\ref{eq:linear_negative}) is equivalent to
the one shown in Fig.~\ref{fig:phase_portrait_unperturbed}. That is, it
possesses an invisible fold-fold of centre type at the origin and two saddle
points at $(1,0)$ and $(-1,0)$ connected by two heteroclinic orbits.
Furthermore, the origin is surrounded by a continuum of periodic orbits whose
periods monotonically increase as they approach to the heteroclinic connections.
Using Eqs.~(\ref{eq:half_period_unperturbed_orbit}) and
(\ref{eq:period_unperturbed_orbit}), the symmetries of the
Hamiltonian~(\ref{eq:linear_unperturbed-Hamiltonian}) and assuming $y_0>0$,
these periods are given by
\begin{align}
\alpha(y_0)&=4\int_0^{1-\sqrt{1-y_0^2}}\frac{1}{\sqrt{y_0^2+x^2-2x}}dx=\nonumber\\
&=2\ln\left(\frac{1+y_0}{1-y_0}\right),\label{eq:period_unperturbed_linear_orbit}
\end{align}
and hence $\alpha'(y_0)>0$.

\subsection{Existence of periodic orbits}
We first study the persistence of $(n,m)$-periodic orbits for $r=1$ in
Eq.~(\ref{eq:loss_of_energy2}) by applying
Theorem~\ref{theo:subharmonic_melnikov_conservative}. The subharmonic Melnikov
function~(\ref{eq:subharmonic_Melnikov_function}) becomes
\begin{equation}
M^{n,m}(t_0)=-\int_{0}^{nT}\Pi_y(q_c(t))\cos(\omega\left(t+t_0\right))dt,
\label{eq:M1}
\end{equation}
where $q_c(t)$ is the periodic orbit of the unperturbed version of
system~(\ref{eq:linear_positive})-(\ref{eq:linear_negative}) with Hamiltonian
$c=\frac{\by^2}{2}$ satisfying $q_{c}(0)=(0,\by)$ and 
\begin{equation}
\by=\alpha^{-1}\left(\frac{nT}{m}\right)=\frac{e^{\frac{nT}{2m}}-1}{e^{\frac{nT}{2m}}+1}.
\label{eq:by_linear_example}
\end{equation}
We now want to obtain an explicit expression for Eq.~(\ref{eq:M1}). Thus we
first note that the solution of
system~(\ref{eq:linear_positive})-(\ref{eq:linear_negative}) with initial
condition $(x_0,y_0)$ at $t=t_0$ is given by
\begin{align}
x^\pm(t)&=C_1^\pm e^{t}+C_2^\pm e^{-t}\pm1
\\
y^\pm(t)&=C_1^\pm e^{t}-C_2^\pm e^{-t},
\label{eq:linear_solution}
\end{align}
where
\begin{equation}
C_1^\pm=\frac{x_0+y_0
\mp1}{2}e^{-t_0},\;C_2^\pm=\frac{x_0-y_0\mp1}{2}e^{t_0}.
\label{eq:solutions:constants}
\end{equation}
As explained in~\S\ref{sec:definitions}, the superscript
$+$ is applied if $x_0>0$ or $x_0=0$ and $y_0>0$, and the $-$ otherwise.\\
Assuming $x_0=0$ and $y_0=\by>0$, an expression for $\Pi_y(q_c(t))$ becomes
\begin{equation}
\Pi_y(q_c(t))=\left\{
\begin{aligned}
C_1e^t-C_2e^{-t},&&&\text{ if }0\le t
\le \frac{nT}{2m}\\
-C_1e^{t-\frac{nT}{2m}}+C_2e^{-t+\frac{nT}{2m}},&&&\text{ if }\frac{nT}{2m}\le t
\le \frac{nT}{m},
\end{aligned}\right.
\label{eq:periodi_orbit_linear}
\end{equation}
where
\begin{equation}
C_1=\frac{\by-1}{2},\;C_2=\frac{-\by-1}{2}.
\label{eq:solutions_constants2}
\end{equation}
Thus, Eq.~(\ref{eq:M1}) becomes
\begin{align*}
&M^{n,m}(t_0)=-\sum_{j=0}^{m-1}\Bigg(\int_{0}^{\frac{nT}{2m}}\left(C_1e^t-C_2e^{-t}\right)\cos\left(
\omega\left( t+t_0+j\frac{nT}{m} \right) \right)dt\\
&+\int_{\frac{nT}{2m}}^{\frac{nT}{m}}\left(-C_1e^{t-\frac{nT}{2m}}+C_2e^{-t+\frac{nT}{2m}}\right)\cos\left(
\omega\left( t+t_0+j\frac{nT}{m} \right) \right)dt\Bigg)
\end{align*}
and, after some computations, we have
\begin{equation}
M^{n,m}(t_0)=\left\{
\begin{aligned}
-\frac{4}{\omega^2+1}\cos\left( \omega t_0 \right),&&&\text{ if }m=1\\
0,&&&\text{ if }m>1.
\end{aligned}
\right.
\label{eq:M2}
\end{equation}
As $M^{n,1}(t_0)$ has two simple zeros, $\bt^1=\frac{T}{4}$ and
$\bt^2=\frac{3T}{4}$, by Theorem~\ref{theo:subharmonic_melnikov_conservative}, 
if $\varepsilon>0$ is small enough, the non-autonomous
system~(\ref{eq:linear_positive})-(\ref{eq:linear_negative}) possesses two
subharmonic $(n,1)$-periodic orbits. In addition, the
initial conditions of these periodic orbits are
$\varepsilon$-close to
\begin{equation}
(0,\by,\bt^1)=(0,\frac{e^{\frac{nT}{2}}-1}{e^{\frac{nT}{2}}+1},\frac{T}{4})
\label{eq:seed1}
\end{equation}
and
\begin{equation}
(0,\by,\bt^2)=(0,\frac{e^{\frac{nT}{2}}-1}{e^{\frac{nT}{2}}+1},\frac{3T}{4}),
\label{eq:seed2}
\end{equation}
respectively.\\
Proceeding as in Remark~\ref{rem:algorithm}, one can solve numerically Eq.~(\ref{eq:periodic_orbit_condition}) with $m=1$
and find the initial conditions for such a periodic orbit.  In
Fig.~\ref{fig:5-1_po} we show the result of that for $n=5$. Each periodic orbit
is obtained by using the points given in Eqs.~(\ref{eq:seed1}) and
(\ref{eq:seed2}) to initiate the Newton method. Then, tracking the obtained
solution, $\varepsilon$ has been increased up to
$\varepsilon=1.6565\cdot10^{-2}$.\\
\begin{figure}
\begin{center}
\includegraphics[angle=-90,width=0.7\textwidth]{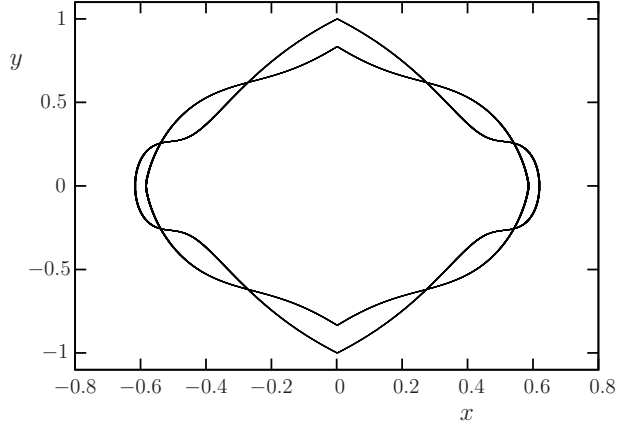}
\end{center}
\caption{Periodic orbits for $n=5$ and $m=1$, $\omega=5$ and
$\varepsilon=1.6565\cdot10^{-2}$. Their initial conditions are
$\varepsilon$-close to the points given in Eqs.~(\ref{eq:seed1}) and
(\ref{eq:seed2}).}
\label{fig:5-1_po}
\end{figure}
Regarding the existence of $(n,m)$-periodic orbits with $m>1$ (ultrasubharmonic
orbits), as the subharmonic Melnikov function is identically zero nothing can be
said using the first order analysis that we have shown in this work.\\
However, if instead~(\ref{eq:linear_Hamitlonian_perturbation}) one considers the
perturbation
\begin{equation*}
H_1(x,t)=x\left(\cos\left( \omega t \right)+\cos\left( k\omega t \right)\right),
\end{equation*}
then, it can be seen that the corresponding Melnikov function possesses simple
zeros for $m=k$ and $n$ relatively prime odd integers. Thus, periodic orbits
impacting $m>1$ times with the switching manifold can exist if higher harmonics of the
perturbation are considered.\\

Let us now introduce the energy dissipation described in
\S\ref{sec:dissipative_subharmonic} and consider the whole
system~(\ref{eq:linear_positive})-(\ref{eq:loss_of_energy2}) with $r<1$ using
the Hamiltonian perturbation~(\ref{eq:hamiltonian_linear}). From
Theorem~\ref{theo:dissipative_subharmonic_orbits}, simple zeros of the
Melnikov function~(\ref{eq:M2}) also guarantee the existence of $(n,1)$-periodic
orbits when $1-r$ is small enough with respect to $\varepsilon$. More precisely,
taking
\begin{equation}
\varepsilon=\te\delta,\;r=1-\tr\delta,
\label{eq:change_of_parameters2}
\end{equation}
condition~(\ref{eq:dissipative_nm_periodic_orbit_existence_condition1}) becomes
\begin{equation}
0<\frac{\tr}{\te}<\frac{1}{2}\left(\frac{e^{\frac{nT}{2}}+1}{e^{\frac{nT}{2}}-1}\right)^2M^{n,1}(t_M):=\rho,
\label{eq:dissipative_po_condition}
\end{equation}
where $M^{n,1}(t_M)=M^{n,1}(\frac{T}{2})=\frac{4}{\omega^2+1}$ is the maximum
value of the Melnikov function~(\ref{eq:M2}). Then there exists an
$(n,1)$-periodic orbit if $\delta>0$ is small enough. The initial condition
of the periodic orbit is located in a $\delta$-neighbourhood of the point
$(x_0,y_0,t_0)=(0,\by,\wt)$, where $\by$ is defined in
Eq.~(\ref{eq:by_linear_example}), such that
\begin{equation*}
\alpha(\by)=nT
\end{equation*}
and $\wt$ is given by the simple zeros of Eq.~(\ref{eq:wt_equation}), which becomes
\begin{equation}
-2\tr\by^2+\te M^{n,1}(t_0)=0.
\label{eq:wt_equation_linear_example}
\end{equation}
Hence we find
\begin{equation}
\wt^i=\frac{1}{\omega}\arccos\left( -\frac{\omega^2+1}{2}
\left(\frac{e^{\frac{nT}{2}}-1}{e^{\frac{nT}{2}}+1}\right)^2\frac{\tr}{\te}\right)+(i-1)\frac{T}{2},\;i=1,2.
\label{eq:wt_solutions_linear}
\end{equation}

\unitlength=\textwidth

\begin{figure}\centering
\subfigure[$\delta=1.04$]
{\includegraphics[angle=-90,width=0.45\textwidth]{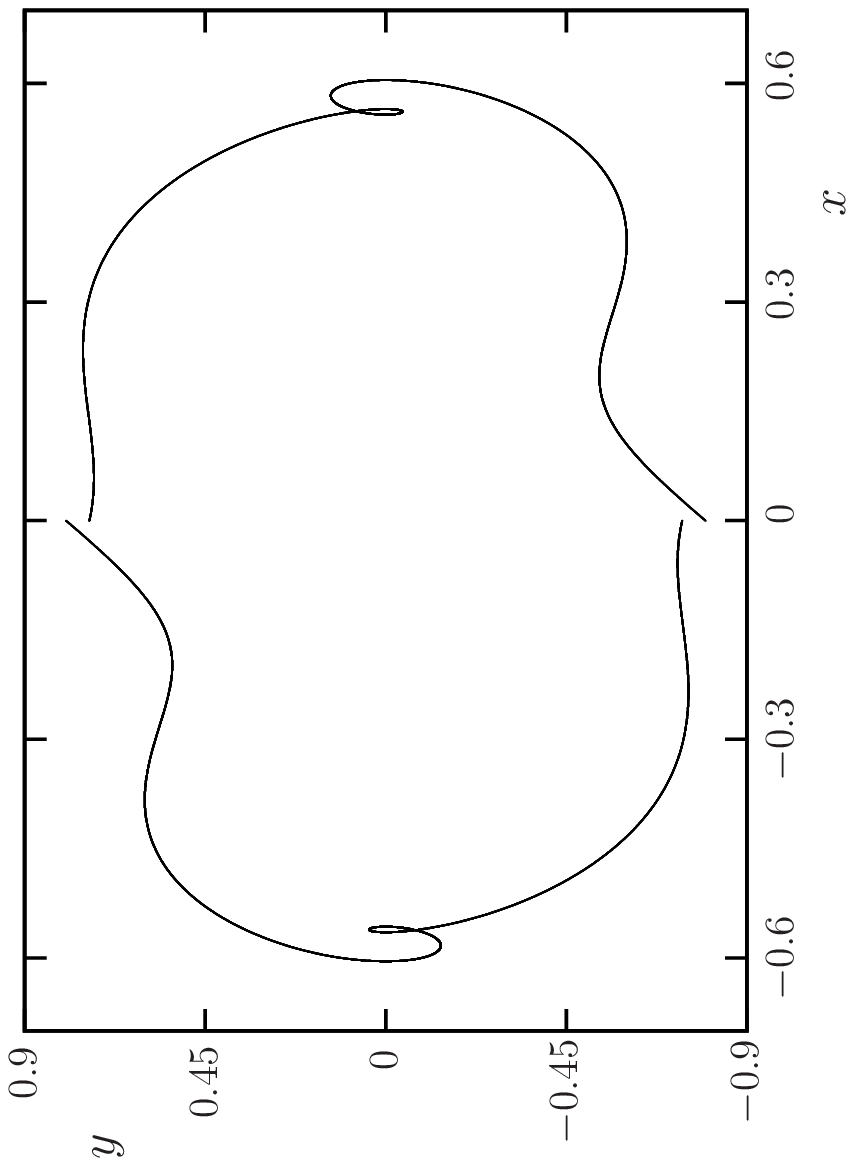}}
\subfigure[$\delta=4.125$]
{\includegraphics[angle=-90,width=0.45\textwidth]{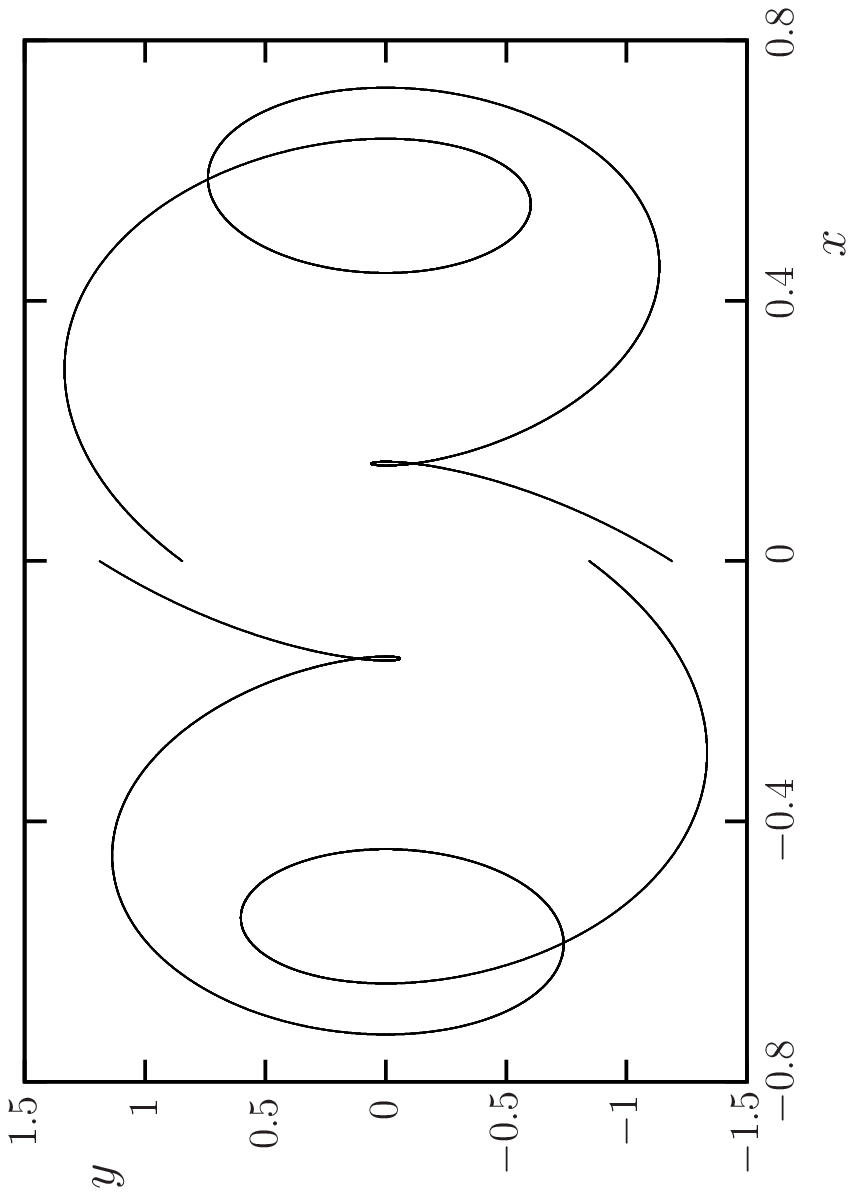}}
\caption{$(5,1)$-periodic orbits for $\omega=5$ and $\frac{\tr}{\te}=0.07$.
Tracking the obtained solution, the perturbation parameter $\delta$ has been
increased up to its maximum value.  Initial conditions close to $(\by,\wt^1)$
and $(\by,\wt^2)$ have been used in (a) and (b),
respectively.}\label{fig:dissipative_po_omega5}
\end{figure}
As before, we set  $n=5$ and $\omega=5$. Then
expression~(\ref{eq:dissipative_po_condition}) becomes
\begin{equation}
0<\frac{\tr}{\te}<0.0914.
\label{eq:5-1_dissipative_po_condition}
\end{equation}
Hence, for any fixed ratio $\frac{\tr}{\te}$
satisfying~(\ref{eq:5-1_dissipative_po_condition}) there exist two points,
$(\by,\wt^i)$, $i=1,2$, such that, if $\delta$ is small enough,
Eq.~(\ref{eq:dissipative_nm_periodic_orbit_condition3}) possesses a solution
$\delta$-close to them. Such a solution is an initial condition for an
$(n,1)$-periodic orbit of
system~(\ref{eq:linear_positive})-(\ref{eq:loss_of_energy2}), with
$r=1-\tr\delta$ and $\varepsilon=\te\delta$.

In Fig.~\ref{fig:dissipative_po_omega5} some of these orbits are shown for one
value of the ratio satisfying~(\ref{eq:5-1_dissipative_po_condition}).  Two different
periodic orbits are shown, which are the ones whose initial conditions are
$\delta$-close to the values $(\by,\wt^1)$ and $(\by,\wt^2)$.  In both cases,
$\delta$ tracks the solution, up to the value from
which solutions of Eq.~(\ref{eq:dissipative_po_condition}) can no longer be
found.  These are the values used in the simulations shown in
Fig.~\ref{fig:dissipative_po_omega5}. Above
the limiting value of the ratio given in~(\ref{eq:5-1_dissipative_po_condition}), 
no $(5,1)$-periodic orbits were found for $\omega=5$.

\subsection{Existence curves}
We now use Theorem~\ref{theo:dissipative_subharmonic_orbits} to derive existence
curves for the $(n,1)$-periodic orbits ($n$ odd) and compare them with the ones obtained
in \cite{Hog89}. Unlike in \cite{Hog89}, we obtain these curves in the
$r$-$\varepsilon$ plane through the variation of $\delta$.\\
The limiting condition provided by
Theorem~\ref{theo:dissipative_subharmonic_orbits} is given in
Eq.~(\ref{eq:dissipative_po_condition}). Thus, for a given $r$ close to $1$
($\tr\delta$ close to 0), it is natural to arbitrarily fix $\tr$ and minimize
$\varepsilon$ by maximizing the ratio in~(\ref{eq:dissipative_po_condition}),
setting $\frac{\tr}{\te}=\rho$.  However, the upper boundary of $\delta$,
$\delta_0$, provided by Theorem~\ref{theo:dissipative_subharmonic_orbits} tends
to zero as $\frac{\tr}{\te}\to \rho$, as it is derived from the implicit
function theorem.  Thus, it is not possible to uniformally bound $\delta$ for
all the ratios between $0$ and $\rho$. Hence, the condition
$\frac{\tr}{\te}=\rho$ can not be used to derive a limiting relation between $r$
and $\varepsilon$. Instead, we proceed as follows.\\
We first fix $n$ odd and $\omega>0$. Then, for every ratio
$0<\frac{\tr}{\te}<\rho$, we increase $\delta$ from $0$ to $\delta_0$ by
numerically tracking the obtained solution using as initial seed one of the
values provided in Eq.~(\ref{eq:seed1}) or (\ref{eq:seed2}). This results in a
curve in the $r$-$\varepsilon$ plane parametrized by the ratio
$\frac{\tr}{\te}$.\\
Regarding the results obtained in \cite{Hog89}, such a curve was obtained analytically
and for global conditions, and has the expression
\begin{equation}
\epsmin(R)=\frac{\left( 1+\omega^2 \right)R\left( 1-\cosh\left(
\frac{nT}{2}
\right) \right)}{\sqrt{\omega^2\sinh^2\left( \frac{nT}{2} \right)R^2+\left( 2-R
\right)^2\left( 1+\cosh\left( \frac{nT}{2} \right) \right)^2}},
\label{eq:betamin}
\end{equation}
where $R=1-r$.\\
As our result is only locally valid, in order to compare both results we have to
check whether both curves are tangent at $\varepsilon=0$. From~(\ref{eq:betamin}) we easily obtain
\begin{equation*}
\epsmin'(0)=-\frac{1+\omega^2}{2}\left(
\frac{e^{\frac{nT}{2}}-1}{e^{\frac{nT}{2}}+1} \right)^2=-\frac{1}{\rho},
\end{equation*}
which, by the inverse function theorem, tells us that both curves are tangent at
$\varepsilon=0$.\\

In Fig.~\ref{fig:existence_curves}, we show an example for $n=5$ and $\omega=5$ 
using initial conditions near~(\ref{eq:seed1}).  As can be seen, the curve provides, 
for every value of $r$, both the maximum and minimum
values of $\varepsilon$ for which a $(5,1)$-periodic orbit exists. The lower boundary derived in \cite{Hog89},
$\left(\epsmin(\cdot)\right)\!^{-1}\!(\varepsilon)$ is also shown. As demonstrated
above, both curves are tangent at $\varepsilon=0$, with slope equal to $\rho$. Note that the
lower boundary does not coincide with the line $1-r=\rho\varepsilon$, although
their difference tends to zero as $r\to 1$. This confirms that one can not
derive the minimum value of $\varepsilon$ from condition
(\ref{eq:dissipative_po_condition}) for every fixed $r$. 

\begin{figure}
\begin{center}
\includegraphics[width=0.6\textwidth,angle=-90]{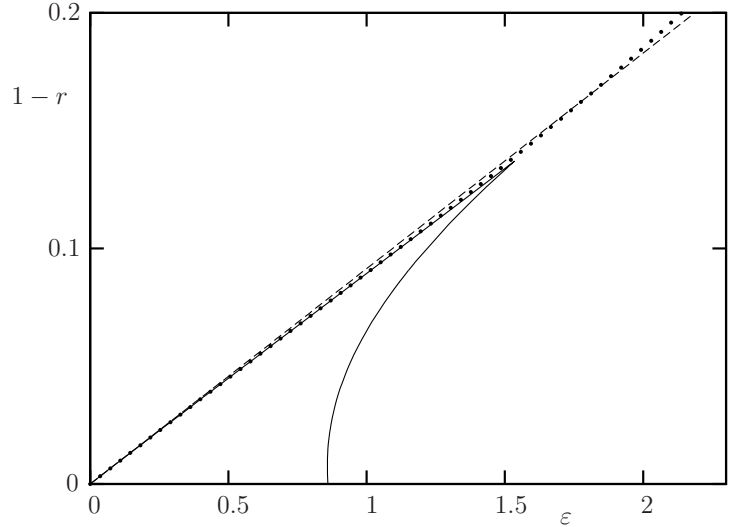}
\end{center}
\caption{Existence curves of a $(5,1)$-periodic orbit for $\omega=5$. Expression
derived from Theorem~\ref{theo:dissipative_subharmonic_orbits} (black line),
expression for $\varepsilon_{\text{min}}$ derived from \cite{Hog89} (dotted
curve) and line $1-r=\rho\varepsilon$ (dashed).}
\label{fig:existence_curves}
\end{figure}

\bibliographystyle{alpha}
\def\zh{Zh}\def\yu{Yu}\def\ya{Ya}

\end{document}